\newtheorem{theorem}{Theorem}[section]
\newtheorem{lemma}[theorem]{Lemma}
\newtheorem{proposition}[theorem]{Proposition}
\newtheorem{corollary}[theorem]{Corollary}
\theoremstyle{definition}
 \newtheorem{definition}[theorem]{Definition}
\newtheorem{example}[theorem]{Example}
\theoremstyle{remark}
\numberwithin{equation}{section}
\begin{document}

\title[Approximation properties]{ Hilbert  $C^*$-bimodules of finite index and approximation properties of  $C^*$-algebras}

\author[M. Forough, M. Amini ]{Marzieh Forough, Massoud Amini}

\address{School of Mathematics, Institute for Research in Fundamental Sciences (IPM), P.O. Box 19395-5746, Tehran , Iran}
\email{mforough86@gmail.com, mforough@ipm.ir}

\address{Faculty of Mathematical Sciences, Tarbiat Modares University, Tehran 14115-134, Iran}
\address{School of Mathematics, Institute for Research in Fundamental Sciences, Tehran 19395-5746, Iran}
\email{mamini@modares.ac.ir, mamini@ipm.ir}

\subjclass[2010]{Primary 46L05; Secondary 46B28,46L55}

\keywords{$C^*$-algebras, finite-dimensional approximation properties, the WEP,  the QWEP, the LLP,  Hilbert $C^*$-bimodules of finite (numerical) index, crossed products}

\begin{abstract}
Let $A$ and $B$ be arbitrary $C^*$-algebras, we prove that the existence of a Hilbert $A$-$B$-bimodule  of finite index  ensures that the WEP, QWEP,  and LLP along with other finite-dimensional approximation properties such as CBAP and (S)OAP are shared by $A$ and $B$. For this, we first study the stability of the WEP, QWEP and LLP under  Morita  equivalence of $C^*$-algebras.

 We present examples of Hilbert $A$-$B$-bimodules which are not of finite index, while such properties are shared between $A$ and $B$. To this end, we study twisted crossed products by amenable  discrete groups.
\end{abstract} \maketitle

\section{Introduction}
Morita equivalence between $C^*$-algebras was introduced by Rieffel in 70's and it has since become a standard tool in operator algebras. Two $C^*$-algebras $A$ and $B$ are called (strongly) Morita equivalent if there exists an imprimitivity $A$-$B$-bimodule which is a full left Hilbert $A$-module and a full right Hilbert $B$-module $X$ with inner products  $\langle .,.\rangle _{A}$ and $\langle .,.\rangle _{B}$ satisfying the compatibility condition $\langle x,y\rangle _{A}z=x\langle y,z\rangle _{B}$, for all $x, y, z \in X$.

Motivated by the fact that  the existence of an imprimitivity bimodule between two $C^*$-algebras gives an isomorphism between their $K_{*}$-groups, the notion of Hilbert $C^*$-bimodules  of finite (numerical) index was introduced in \cite{watatani2} and \cite{pinzari}.  In particular, the authors aimed to study a relation between the Jones index theory and $K$-theory for $C^*$-algebras.  Indeed, one can consider the notion of  Hilbert $A$-$B$-bimodules as a generalization of imprimitivity bimodules by replacing the above compatibility condition by a Pimsner-Popa type inequality. Imprimitivity bimodules are exactly the Hilbert  $C^*$-bimodules of finite index  such that both right and left indices are  equal to the identity. Watatani in \cite{watatani} introduced and studied the conditional expectations of finite index between unital $C^*$-algebras and Izumi in \cite{izumi} generalized this notion to the non-unital case. Hilbert $C^*$-bimodules associated to conditional expectations of finite-index  type provide typical examples of Hilbert $C^*$-bimodules of finite index.

Several properties of $C^*$-algebras are known to be stable under Morita equivalence. These include both finite-dimensional approximation properties and representation theoretic properties, c.f  \cite{skalski}, \cite{raeburn}. Regarding Hilbert $C^*$-bimodules of finite index as a generalization of imprimitivity bimodules, it is natural to ask which properties of $C^*$-algebras are preserved in the presence of Hilbert $C^*$-bimodules of finite index.  We show that finite-dimensional approximation properties such as  nuclearity, exactness, completely bounded approximation property (CBAP), as well as other important related properties like the local lifting property (LLP), weak expectation property (WEP), and quotient weak expectation property (QWEP) are shared between $C^*$-algebras $A$ and $B$ if there exists a Hilbert $A$-$B$-bimodule of finite index.  For this,  we first prove the stability of the LLP, WEP, and QWEP under Morita equivalence. Moreover, we show that the QWEP and nuclearity  are preserved in the presence of Hilbert $C^*$-bimodules of finite numerical index.

On the other hand, we give examples of $C^*$-algebras $A$ and $B$ such that there is no canonical Hilbert $A$-$B$-bimodule of finite index  while $A$ and $B$ share all the above mentioned properties.  Let $G$ be a discrete group acting on a unital $C^*$-algebra $A$ with a normal subgroup $H$. One can associate to the canonical conditional expectation $E:A \rtimes _{r}  G \rightarrow A \rtimes _{r} H$ a Hilbert $C^*$-bimodule $\mathcal{E}$. Then it can be concluded from \cite{izumi} and \cite{khoshkam} that the Hilbert $C^*$-bimodule $\mathcal{E}$ is of finite index if and only if $[G:H]$ is finite. When $G/H$ is  amenable  then we show that $A \rtimes_{r} G$ and $A \rtimes _{r} H$ share nuclearity, exactness, CBAP, (S)OAP, the WEP, QWEP and LLP, however $\mathcal E$ is not necessarily a Hilbert $C^*$-bimodule of finite index. For this,  we employ the notion of reduced twisted crossed products  and view $A \rtimes _{r}G$ as a reduced twisted reduced crossed product of $A \rtimes _{r}H$ by amenable group $G/H$. We investigate the preservation of  these properties under taking twisted crossed product by amenable discrete groups. To this end, we construct approximating nets in the case of twisted crossed products by discrete amenable groups.

We sketch the content of this paper.  In section 2, we discuss the stability of several properties of $C^*$-algebras, including the WEP, QWEP and LLP by Morita equivalence. In section 3, we first review some basic properties of Hilbert $C^*$-bimodules of finite index. Then we prove that if there exists a Hilbert $A$-$B$-bimodule of finite index then $A$ and  $B$ share finite-dimensional approximation properties like nuclearity, exactness, the CBAP, and (S)OAP along with the WEP, QWEP and LLP. Finally in section 4,  we show that if a discrete group $G$ acts on a unital $C^*$-algebra $A$ and $H$ is a normal co-amenable subgroup of $G$, then the $C^*$-algebras $A\rtimes _{r}H $ and $A\rtimes _{r}G$ share all the desired properties, while the associated canonical  Hilbert $C^*$-bimodule may fail to be of finite index. 

We use the standard notions of the theory of Hilbert $C^*$-modules. Let $X_{B}$ be a right Hilbert $C^*$-module over a $C^*$-algebra $B$.  We denote by $L(X_{B})$ the $C^*$-algebra of adjointable operators on $X_{B}$.  For $x, y \in X_{B}$, we put $\theta ^{r}_{x,y}(z)=x\langle y,z \rangle _{B}$ for $z \in X$ and call it a (right) rank one operator. We denote by $\mathbb{K}(X_{B})$ the $C^*$-algebra generated by rank one operators. Throughout this paper, we use the abbreviations c.p. and c.c.p. for completely positive and contractive completely positive maps. 
\section{Morita Equivalence}

In this section we prove that the WEP,  QWEP and LLP are invariant under Morita equivalence. To do this, we first recall some results from \cite{skalski} and \cite{smith}.

In Proposition 3.3 of \cite{skalski} it is  shown that certain finite-dimensional approximation properties including nuclearity, exactness, CBAP, (S)OAP are stable under Morita equivalence (for alternative proofs in the case of nuclearity and exactness, see \cite{raeburn} or \cite{katsura}). The proof relies on a factorization via algebras \cite[Lemma 3.1]{skalski}, instead of finite-rank factorizations,  followed by a construction due to Blecher \cite{blecher}. Indeed, we have the following criterion from Lemma 3.1 in \cite{skalski}.

 \begin{lemma}\label{skalski}
 Suppose that there exists an approximating net, that is, a net of c.c.p. maps $\varphi _{i}: A \rightarrow C_{i}$ and $\psi _{i}: C_{i} \rightarrow A$ with $\psi _{i}\circ\varphi _{i} \rightarrow id$, in point-norm topology. Consider the following finite-dimensional approximation properties: nuclearity, exactness, CBAP, (S)OAP. If  all $C_{i}$ have any of these properties then so does $A$, except in the case of the CBAP, where $A$ has the CBAP if $sup \Lambda (C_{i}) \leq \infty$.
 \end{lemma}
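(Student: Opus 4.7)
The plan is a standard two-step diagonalization. By hypothesis, for each index $i$ the algebra $C_i$ admits a net $(\alpha_{i,k}, \beta_{i,k})_{k}$ of c.c.p. (or c.b., in the CBAP case) maps
\[
C_i \xrightarrow{\ \alpha_{i,k}\ } F_{i,k} \xrightarrow{\ \beta_{i,k}\ } C_i,
\]
with each $F_{i,k}$ finite dimensional (matrix algebras for nuclearity, CBAP, (S)OAP; general finite-dimensional $C^*$-algebras for exactness), such that $\beta_{i,k}\alpha_{i,k} \to \mathrm{id}_{C_i}$ in the topology dictated by the property (point-norm for nuclearity, exactness, CBAP; point-$\sigma$-weak on the $\mathbb{K}$-amplification for (S)OAP); the CBAP constants are controlled by $\Lambda(C_i)$. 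Composing with the assumed net yields
\[
A \xrightarrow{\ \alpha_{i,k}\circ\varphi_i\ } F_{i,k} \xrightarrow{\ \psi_i\circ\beta_{i,k}\ } A,
\]
which is the candidate approximating net for $A$.

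For each $a \in A$, the triangle inequality and the contractivity of $\psi_i$ yield
\[
\| \psi_i\beta_{i,k}\alpha_{i,k}\varphi_i(a) - a \| \leq \|\beta_{i,k}\alpha_{i,k}(\varphi_i(a)) - \varphi_i(a)\| + \|\psi_i\varphi_i(a) - a\|.
\]
The first term tends to $0$ in $k$ for each fixed $i$ (by the property of $C_i$), and the second tends to $0$ in $i$ by hypothesis, so a diagonal subnet over the product directed set produces point-norm convergence to $\mathrm{id}_A$. For CBAP, the c.b.-norm of $\psi_i\beta_{i,k}\alpha_{i,k}\varphi_i$ is bounded by $\|\beta_{i,k}\|_{cb} \leq \Lambda(C_i) \leq \sup_i \Lambda(C_i)$, because $\varphi_i$ and $\psi_i$ are completely contractive; this forces $\Lambda(A) \leq \sup_i \Lambda(C_i) < \infty$.

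The main subtlety lies in the (S)OAP case: the definition requires point-$\sigma$-weak (or point-strong) convergence on the $\mathbb{K}$-amplification, not point-norm on $A$ itself. Since c.c.p. maps amplify to c.c.p. maps on $A \otimes \mathbb{K}$ and are $\sigma$-weakly continuous on bounded sets, the diagonal argument still goes through once one checks that convergence on elementary tensors, obtained by combining the point-norm hypothesis on $\psi_i\varphi_i$ with the (S)OAP of $C_i$, implies convergence in the requisite topology on the amplified algebra. Apart from this bookkeeping, the argument is essentially the one in Lemma 3.1 of Skalski's cited paper, which is why the present statement is recorded as a direct quotation rather than reproved.
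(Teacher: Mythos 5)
The paper itself gives no proof of this lemma: it is quoted directly from Lemma 3.1 of the cited Skalski--Zacharias paper, so the comparison can only be with the argument there. Your two-step diagonalization is indeed the intended mechanism, and your handling of nuclearity and of the CBAP bound $\Lambda(A)\leq\sup_i\Lambda(C_i)$ (using that $\varphi_i$ and $\psi_i$ are complete contractions) is correct.

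There is, however, a genuine error in the exactness case. You assert that exactness of $C_i$ yields c.c.p. maps $C_i\xrightarrow{\alpha_{i,k}}F_{i,k}\xrightarrow{\beta_{i,k}}C_i$ through finite-dimensional algebras with $\beta_{i,k}\alpha_{i,k}\to\mathrm{id}_{C_i}$ in point-norm. That is the completely positive approximation property, which characterizes nuclearity, not exactness; for instance $C^*_r(\mathbb{F}_2)$ is exact but admits no such factorization of its identity map. Exactness only provides a \emph{nuclear embedding} $C_i\hookrightarrow B(H_i)$, so the maps $\beta_{i,k}$ land in $B(H_i)$ rather than in $C_i$, and your composition $\psi_i\circ\beta_{i,k}$ is not defined. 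The repair requires a different argument: fix a faithful representation $A\subseteq B(H)$ and use the fact that every c.c.p. map from an exact $C^*$-algebra into the injective algebra $B(H)$ is nuclear (extend along a nuclear embedding via Arveson's extension theorem). Applying this to $\psi_i\colon C_i\to A\subseteq B(H)$ gives approximate factorizations of $\psi_i$ through matrix algebras into $B(H)$; composing with $\varphi_i$ and diagonalizing shows the inclusion $A\subseteq B(H)$ is nuclear, hence $A$ is exact. Your (S)OAP paragraph is also deferred to ``bookkeeping''; the point actually needed there is that for a uniformly bounded net, point-norm convergence on $A$ passes to the amplification on elementary tensors of $\mathbb{K}\otimes A$ and hence, by density, to all of $\mathbb{K}\otimes A$ --- this does go through, but it is the substance of the claim rather than a formality.
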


In \cite{blecher}, Blecher constructs completely contractive maps implementing the factorization of a Hilbert $C^*$-module $X$ over $A$ via the column modules $A^{\oplus n}$. These maps induce c.c.p. maps on the $C^*$-algebra $\mathbb{K}(X)$ of compact operators on $X$ which are factored  via $M_{n}(B)$. We recall the next result from \cite{smith}. 

\begin{proposition}\label{diag+morita}
Let $B$ be a $C^*$-algebra and $X$ be a right Hilbert $C^*$-module over $B$. Then there exist completely positive contractive maps $\varphi_{\alpha}: \mathbb{K}(X) \rightarrow M_{n(\alpha)}(B)$ and $\psi _{\alpha}: M_{n(\alpha)}(B)\rightarrow \mathbb{K}(X)$ such that $\|\varphi_{\alpha}\psi _{\alpha}(x)- x\|\rightarrow 0$, for all $x \in \mathbb{K}(X)$.
\end{proposition}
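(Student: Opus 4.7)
The plan is to use Blecher's factorization technique from \cite{blecher}: build a self-adjoint ``frame-style'' approximate unit for $\mathbb{K}(X)$ consisting of finite sums of rank-one operators, then read off the required factorization through $M_{n(\alpha)}(B)$ from the associated adjointable operators.

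First I would show that $\mathbb{K}(X)$ admits an approximate unit $\{P_\alpha\}$ of positive contractions of the form
\[
P_\alpha = \sum_{i=1}^{n(\alpha)} \theta^r_{x_i^\alpha,\, x_i^\alpha}, \qquad 0 \le P_\alpha \le 1,
\]
with $x_i^\alpha \in X$. This rests on the density of finite sums of rank-one operators in $\mathbb{K}(X)$, combined with a functional-calculus truncation to keep the chosen elements positive and contractive while still furnishing an approximate unit.

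For each index $\alpha$, I would then introduce the adjointable operator $V_\alpha : X \to B^{n(\alpha)}$, where $B^{n(\alpha)}$ is viewed as the column Hilbert $B$-module, by $V_\alpha(x) = \bigl(\langle x_i^\alpha, x\rangle_B\bigr)_{i=1}^{n(\alpha)}$, whose adjoint is $V_\alpha^*(b_1,\dots,b_{n(\alpha)}) = \sum_i x_i^\alpha b_i$. A short computation gives $V_\alpha^* V_\alpha = P_\alpha$, so $\|V_\alpha\| \le 1$, while $V_\alpha V_\alpha^* = (\langle x_i^\alpha, x_j^\alpha\rangle_B)_{i,j}$ lies in $L(B^{n(\alpha)}) \cong M_{n(\alpha)}(B)$. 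Setting
\[
\varphi_\alpha(T) = V_\alpha T V_\alpha^*, \qquad \psi_\alpha(a) = V_\alpha^* a V_\alpha,
\]
for $T \in \mathbb{K}(X)$ and $a \in M_{n(\alpha)}(B)$, produces two compressions by the contraction $V_\alpha$ (respectively $V_\alpha^*$), and both are therefore completely positive and contractive.

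The key identity is then
\[
\psi_\alpha\bigl(\varphi_\alpha(T)\bigr) \;=\; V_\alpha^* V_\alpha \, T \, V_\alpha^* V_\alpha \;=\; P_\alpha\, T\, P_\alpha,
\]
which converges to $T$ in norm for every $T \in \mathbb{K}(X)$ because $\{P_\alpha\}$ is an approximate unit of $\mathbb{K}(X)$. The only real subtlety I anticipate is verifying the existence of an approximate unit of this prescribed ``finite frame'' shape with the desired norm control; this reduces to a standard manipulation with sums of rank-one operators together with continuous functional calculus. Everything else is a direct unwinding of the definitions of adjointable operators on column modules.
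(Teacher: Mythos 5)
Your construction is correct and is essentially the argument the paper relies on: the paper offers no proof of this proposition, simply recalling it from Dykema--Smith, whose proof is exactly Blecher's factorization of $X$ through the column modules $B^{\oplus n}$ that you reproduce (the contractions $V_\alpha$, the compressions $\varphi_\alpha=V_\alpha(\cdot)V_\alpha^*$, $\psi_\alpha=V_\alpha^*(\cdot)V_\alpha$, and the identity $\psi_\alpha\varphi_\alpha(T)=P_\alpha TP_\alpha$). The one cosmetic point is that the frame-type approximate unit $P_\alpha=\sum_i\theta^r_{x_i,x_i}$ with $0\le P_\alpha\le 1$ is best obtained not by functional calculus (which would not preserve the prescribed shape) but by noting that elements $S^*S$ with $S$ a finite sum of rank-one operators are again of the form $\sum_k\theta^r_{v_k,v_k}$, are dense in the positive cone of $\mathbb{K}(X)$, and can simply be rescaled to have norm at most one while still approximating a fixed approximate unit of $\mathbb{K}(X)$.
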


Next, we show the stability of the QWEP,  LLP and WEP under Morita equivalence of $C^*$-algebras. First, we consider the case of QWEP. For this, we require the following observation which follows from  Corollary 5.3 of \cite{ozawa}.

\begin{lemma}\label{ozawa}
Suppose that there exist c.c.p. maps $\varphi_{\alpha}: A \rightarrow C_{i}$ and $\psi _{\alpha}: C_{i} \rightarrow A$  such that $\|\varphi_{\alpha}\psi _{\alpha}(x)- x\|\rightarrow 0$, for all $x \in A$. If all $C^*$-algebras $C_{i}$ are QWEP, then so is $A$.
\end{lemma}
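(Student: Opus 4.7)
The strategy is to invoke the local, point-norm approximate factorization characterization of QWEP recorded in Corollary 5.3 of \cite{ozawa}. That corollary is the QWEP-analogue of the factorization criterion in Lemma \ref{skalski}: a $C^*$-algebra $D$ is QWEP whenever the identity of $D$ can be approximated in point-norm by c.c.p.\ maps of the form $\psi\circ\varphi$ with $\varphi:D\to E$ and $\psi:E\to D$ c.c.p.\ and $E$ QWEP.

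First I would reconcile notation: the composition $\varphi_\alpha\psi_\alpha$ appearing in the statement must be read as $\psi_\alpha\circ\varphi_\alpha:A\to A$, since this is the only self-map of $A$ one can form from $\varphi_\alpha:A\to C_\alpha$ and $\psi_\alpha:C_\alpha\to A$. With this reading, the hypothesis says exactly that $\psi_\alpha\circ\varphi_\alpha\to\mathrm{id}_A$ in point-norm, with each intermediate algebra $C_\alpha$ QWEP. An application of Corollary 5.3 of \cite{ozawa} then yields that $A$ is QWEP; equivalently, for every finite $F\subset A$ and $\varepsilon>0$ one picks an index $\alpha$ large enough that $\|\psi_\alpha\varphi_\alpha(x)-x\|<\varepsilon$ for all $x\in F$, producing the factorization $A\xrightarrow{\varphi_\alpha}C_\alpha\xrightarrow{\psi_\alpha}A$ through the QWEP algebra $C_\alpha$ that the criterion demands.

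Essentially all of the content sits inside the cited corollary, and this is where the main obstacle lies. Its proof rests on two nontrivial facts about QWEP: it is stable under $\ell^\infty$-products and under quotients by closed two-sided ideals (so ultraproducts of QWEP algebras are QWEP), and, under the stated approximate factorization, the canonical inclusion $A\hookrightarrow A^{**}$ admits a c.c.p.\ factorization through the ultraproduct $\prod_\omega C_\alpha$ composed with the weak$^*$-limit map $A^\omega\to A^{**}$; by Kirchberg's characterization of QWEP in terms of the embedding $A\hookrightarrow A^{**}$, one then transfers QWEP from the ultraproduct down to $A$. If one wanted to recover the argument from scratch, the hard step would be organizing this ultraproduct/weak$^*$-limit machinery and checking that the composition indeed reproduces the canonical inclusion $A\hookrightarrow A^{**}$; in the present context, the lemma itself is an immediate consequence of that corollary.
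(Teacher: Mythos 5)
Your proposal matches the paper exactly: the paper gives no independent proof of this lemma and simply derives it as a direct consequence of Corollary 5.3 of \cite{ozawa}, which is precisely what you do (your added sketch of the ultraproduct machinery inside that corollary is extra but harmless). The notational reading $\psi_\alpha\circ\varphi_\alpha\to\mathrm{id}_A$ is the intended one.
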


\begin{proposition} \label{ozawa+Morita}
Let $A$ and $B$ are Morita equivalent $C^*$-algebras. Then $A$ is QWEP if $B$ is $QWEP$.
\end{proposition}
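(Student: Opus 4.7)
The plan is to combine Proposition \ref{diag+morita} with Lemma \ref{ozawa} in a very direct way, once we identify $A$ with a suitable algebra of compact operators. Since $A$ and $B$ are Morita equivalent, there exists an imprimitivity $A$-$B$-bimodule $X$, and via the left action we obtain a canonical $*$-isomorphism $A \cong \mathbb{K}(X_{B})$, where $X_{B}$ is $X$ regarded as a full right Hilbert $B$-module. This identification is the bridge that lets us bring Blecher's factorization into play.

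Next I would apply Proposition \ref{diag+morita} to the right Hilbert $B$-module $X_{B}$. This produces nets of c.c.p.\ maps
\[
\varphi_{\alpha}: A \cong \mathbb{K}(X_{B}) \longrightarrow M_{n(\alpha)}(B), \qquad \psi_{\alpha}: M_{n(\alpha)}(B) \longrightarrow \mathbb{K}(X_{B}) \cong A,
\]
with $\psi_{\alpha}\circ\varphi_{\alpha} \to \mathrm{id}_{A}$ in the point-norm topology. So $A$ is asymptotically factored, in the sense required by Lemma \ref{ozawa}, through the matrix algebras $M_{n(\alpha)}(B)$.

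It then remains to know that each $M_{n(\alpha)}(B)$ is QWEP. This is a standard permanence property: if $B = N/J$ with $N$ having the WEP, then $M_{n}(B) \cong M_{n}(N)/M_{n}(J)$, and $M_{n}(N) \cong M_{n} \otimes N$ inherits the WEP because tensoring with the nuclear algebra $M_{n}$ preserves the WEP; hence $M_{n}(B)$ is QWEP. With this in hand, Lemma \ref{ozawa} applied to the approximating net $(\varphi_{\alpha}, \psi_{\alpha})$ immediately gives that $A$ is QWEP, which completes the argument.

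The main point to get right will be the identification of $A$ with $\mathbb{K}(X_{B})$ so that Proposition \ref{diag+morita} can be invoked; beyond that the argument is essentially a verification that the abstract criterion of Lemma \ref{ozawa} applies, together with the (routine but worth stating) observation that QWEP passes from $B$ to $M_{n}(B)$.
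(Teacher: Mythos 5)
Your proposal is correct and follows essentially the same route as the paper: identify $A$ with $\mathbb{K}(X_{B})$ via the imprimitivity bimodule, invoke Proposition \ref{diag+morita} to factor asymptotically through $M_{n(\alpha)}(B)$, observe that each $M_{n(\alpha)}(B)$ is QWEP, and conclude with Lemma \ref{ozawa}. The only difference is that you spell out the (routine) permanence of QWEP under passing to matrix amplifications, which the paper simply asserts.
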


\begin{proof}
Let $B$ be QWEP and $C^*$-algebra $A$ be Morita equivalent with $A$. Assume that $X$ is an imprimitive $A$-$B$-bimodule. It follows from Proposition \eqref{diag+morita} that there exist completely positive contractive maps $\varphi_{\alpha}: \mathbb{K}(X) \rightarrow M_{n(\alpha)}(B)$ and $\psi _{\alpha}: M_{n(\alpha)}(B)\rightarrow \mathbb{K}(X)$ such that $\|\varphi_{\alpha}\psi _{\alpha}(x)- x\|\rightarrow 0$, for all $x \in \mathbb{K}(X)$. Note that each $M_{n(\alpha)}(B)$ is QWEP since we assume that $B$ is QWEP. Now, employ Lemma \eqref{ozawa} to conclude that $\mathbb{K}(X)$ is QWEP. Observe that $A$ is $*$-isomorphic to $K(X_{B})$ so $A$ is QWEP, as desired.
\end{proof}

We recall that a pair of $C^*$-algebras $(A, C)$ is called nuclear if the algebraic tensor product $ A \odot C$ admits a unique $C^*$-tensor norm. Hence we can restate the Kirchberg's tensorial characterization of the LLP and WEP as follows: Suppose that $A$ is a unital $C^*$-algebra, then
 $A$ has the LLP if and only if $(A, B(l^{2}))$ is a nuclear pair and moreover,  $A$ has the WEP if and only if $(A, C^{*}(\mathbb{F}_{\infty}))$ is a unclear pair.

\begin{proposition}\label{Morita+WEP+LLP}
Suppose that  $A$ and $B$ are Morita equivalent $C^*$-algebras. Then $A$ and $B$ share the LLP or WEP.
\end{proposition}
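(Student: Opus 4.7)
The plan is to use Kirchberg's tensorial characterization recalled just before the proposition in order to reduce the LLP and WEP statements to a single uniform claim about nuclear pairs. Concretely, for a fixed auxiliary $C^{*}$-algebra $C\in\{B(\ell^{2}),\,C^{*}(\mathbb{F}_{\infty})\}$, the desired property holds for $A$ iff $(A,C)$ is a nuclear pair, and similarly for $B$. It therefore suffices to prove the following transfer principle: if $A$ and $B$ are Morita equivalent and $(B,C)$ is a nuclear pair for some $C^{*}$-algebra $C$, then so is $(A,C)$. Interchanging the roles of $A$ and $B$ then yields the proposition in both directions.

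To establish the transfer, I would identify $A\cong\mathbb{K}(X_{B})$ via the imprimitivity bimodule $X$ and then apply Proposition \ref{diag+morita} to obtain c.c.p.\ maps
\[
\varphi_{\alpha}\colon A\longrightarrow M_{n(\alpha)}(B),\qquad \psi_{\alpha}\colon M_{n(\alpha)}(B)\longrightarrow A,
\]
with $\psi_{\alpha}\circ\varphi_{\alpha}\to\mathrm{id}_{A}$ in the point-norm topology. The next observation is that $(M_{n}(B),C)$ inherits the nuclear-pair property from $(B,C)$, because the canonical identifications
\[
M_{n}(B)\otimes_{\min}C\cong M_{n}(B\otimes_{\min}C),\qquad M_{n}(B)\otimes_{\max}C\cong M_{n}(B\otimes_{\max}C)
\]
turn the coincidence of the two cross-norms on $B\odot C$ into the coincidence of the two cross-norms on $M_{n}(B)\odot C$. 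Hence each $M_{n(\alpha)}(B)\odot C$ carries a unique $C^{*}$-norm.

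The main step is the tensor-norm estimate. Given $z\in A\odot C$, I would chain the tensor extensions of $\varphi_{\alpha}$ and $\psi_{\alpha}$ (which are contractive on both $\otimes_{\min}$ and $\otimes_{\max}$, being c.c.p.) with the nuclear-pair equality at the $\alpha$-th level to get
\[
\|(\psi_{\alpha}\otimes\mathrm{id})(\varphi_{\alpha}\otimes\mathrm{id})(z)\|_{\max}\le\|(\varphi_{\alpha}\otimes\mathrm{id})(z)\|_{\max}=\|(\varphi_{\alpha}\otimes\mathrm{id})(z)\|_{\min}\le\|z\|_{\min}.
\]
Since $z=\sum_{i}a_{i}\otimes c_{i}$ is a finite sum, the point-norm convergence $\psi_{\alpha}\varphi_{\alpha}(a_{i})\to a_{i}$ forces $(\psi_{\alpha}\otimes\mathrm{id})(\varphi_{\alpha}\otimes\mathrm{id})(z)\to z$ in the $\max$-norm, so passing to the limit yields $\|z\|_{\max}\le\|z\|_{\min}$ and therefore equality. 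Specializing $C$ to $B(\ell^{2})$ and to $C^{*}(\mathbb{F}_{\infty})$ produces the LLP and WEP transfer, respectively.

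The only step requiring real care is the interaction of the c.c.p.\ factorization with $\otimes_{\max}$: one must know that $\psi_{\alpha}\otimes\mathrm{id}_{C}$ extends contractively to the maximal tensor product (which is standard for c.c.p.\ maps) and that point-norm convergence of $\psi_{\alpha}\varphi_{\alpha}$ propagates to $\|\cdot\|_{\max}$ on finite elementary tensors. A mild ancillary subtlety is that Kirchberg's tensorial criterion is usually phrased in the unital setting, whereas Morita equivalence need not preserve unitality; this is handled by passing to unitizations (or using the non-unital version of the criterion) before running the argument above.
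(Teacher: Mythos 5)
Your proposal is correct, and its top-level reduction is exactly the one the paper uses: by Kirchberg's tensorial characterization, both the LLP and the WEP transfer as soon as one shows that for a fixed auxiliary $C^*$-algebra $C$ the nuclear-pair property of $(B,C)$ passes to $(A,C)$ under Morita equivalence. Where you diverge is in how that transfer is established. The paper simply invokes the argument of Theorem 15 of \cite{raeburn}, which works through the linking algebra: $A$ and $B$ sit as complementary full corners in the linking algebra $L$, and the coincidence of the minimal and maximal norms is passed back and forth through corners of $L\odot C$. You instead give a self-contained proof using the c.c.p.\ factorization $A\to M_{n(\alpha)}(B)\to A$ of Proposition \ref{diag+morita}, combined with the functoriality of $\otimes_{\min}$ and $\otimes_{\max}$ under c.c.p.\ maps and the stability of nuclear pairs under matrix amplification; the chain $\|(\psi_{\alpha}\varphi_{\alpha}\otimes\mathrm{id})(z)\|_{\max}\le\|z\|_{\min}$ followed by the point-norm limit on finite sums of elementary tensors is sound. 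Your route has the advantage of being internal to the machinery the paper has already set up (the same Proposition \ref{diag+morita} used for the QWEP case), and it makes the proposition's proof uniform with Proposition \ref{ozawa+Morita}; the paper's route is shorter but outsources the real work to a citation. You are also right to flag the unitality issue in Kirchberg's criterion, which the paper passes over in silence; either the unitization device or the non-unital form of the tensorial characterization handles it.
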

\begin{proof}
Suppose that $C^*$-algebras $A$ and $B$ are Morita equivalent. Following the argument given in the proof of Theorem 15 of \cite{raeburn}, we conclude that  $(A, C)$ is a nuclear pair if and only if $(B,C)$ is a nuclear pair. The result can be concluded by this observation together with the Kirchberg tensorial factorization for the LLP and QWEP.

\end{proof}

We do not know if the lifting property  (LP) is stable under Morita equivalence, however we can obtain the following result.
\begin{proposition}
Let $A$ and $B$ be Morita equivalent separable $C^*$-algebras and let  $A$ have  the LP. Suppose $J$ is any closed ideal of a $W^*$-algebra $C$, then any c.c.p. map $\theta : B \rightarrow {C}/{J}$ has a c.c.p. lift.
\end{proposition}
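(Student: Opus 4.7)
The plan is to combine the factorization through matrix algebras furnished by Morita equivalence with the LP of $A$, and then extract a c.c.p.\ lift by exploiting the $W^*$-compactness of the unit ball of $C$. Applying Proposition \ref{diag+morita} to the conjugate $B$-$A$-imprimitivity bimodule (that is, with the roles of $A$ and $B$ swapped) yields c.c.p.\ maps $\varphi_n \colon B \to M_{k(n)}(A)$ and $\psi_n \colon M_{k(n)}(A) \to B$ with $\|\psi_n \varphi_n(b) - b\| \to 0$ for every $b \in B$; by separability of $B$ we may take the index set to be countable. Since the LP passes to matrix algebras, each $M_{k(n)}(A)$ inherits the LP from $A$.

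Given the c.c.p.\ map $\theta \colon B \to C/J$, the composite $\theta \circ \psi_n \colon M_{k(n)}(A) \to C/J$ admits, by the LP of $M_{k(n)}(A)$, a c.c.p.\ lift $\widetilde{\theta}_n \colon M_{k(n)}(A) \to C$ satisfying $\pi \circ \widetilde{\theta}_n = \theta \circ \psi_n$, where $\pi \colon C \to C/J$ denotes the quotient. Setting $\Psi_n := \widetilde{\theta}_n \circ \varphi_n \colon B \to C$, each $\Psi_n$ is c.c.p.\ and
\[
\pi \Psi_n(b) \;=\; \theta\bigl(\psi_n \varphi_n(b)\bigr) \;\longrightarrow\; \theta(b) \qquad \text{in norm for every } b \in B,
\]
so the sequence $(\Psi_n)$ is an \emph{approximate lift} of $\theta$.

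The $W^*$-structure of $C$ now enters. The set of c.c.p.\ maps from $B$ into $C$ is compact in the point-$\sigma$-weak (BW) topology, so one may extract a subnet along which $\Psi_{n_\alpha}(b) \to \Phi(b)$ in $\sigma(C, C_*)$ for every $b \in B$, producing a c.c.p.\ cluster point $\Phi \colon B \to C$. To identify $\pi \Phi$ with $\theta$ one passes to the biduals: the extension $\pi^{**} \colon C^{**} \to (C/J)^{**}$ is a normal $*$-homomorphism, and combining the BW-convergence $\Psi_{n_\alpha} \to \Phi$ with the point-norm convergence $\pi \Psi_{n_\alpha} \to \theta$ forces the cluster value $\pi^{**}(\Phi(b))$ to equal $\theta(b)$ inside $(C/J)^{**}$, whence $\pi \Phi(b) = \theta(b)$.

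The principal obstacle is this last verification. The quotient map $\pi$ itself is not $\sigma$-weakly continuous unless $J$ is $\sigma$-weakly closed, so the BW-limit does not descend through $\pi$ by a bare continuity argument. One must exploit both the normality of $\pi^{**}$ and the fact that the image sequence $\pi \Psi_{n_\alpha}$ converges in norm (rather than merely $\sigma$-weakly) to $\theta$. This is precisely where the $W^*$-hypothesis on $C$ is indispensable, and it is exactly this subtlety that prevents the present argument from yielding full Morita-invariance of the LP.
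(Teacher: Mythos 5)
Your construction of the approximate lift is exactly the paper's: factor $B$ through $M_{k(n)}(A)$ via Proposition \ref{diag+morita} applied to the conjugate module, lift $\theta\circ\psi_n$ using the LP of $M_{k(n)}(A)$, and extract a point-ultraweak cluster point $\Phi$ of the c.c.p.\ maps $\Psi_n=\widetilde{\theta}_n\circ\varphi_n$ from the dual-space structure of $C$. The gap is in the final identification $\pi\Phi=\theta$, and the repair you sketch does not close it. The cluster point is obtained in the point-$\sigma(C,C_*)$ topology, whereas normality of $\pi^{**}$ is continuity for $\sigma(C^{**},C^{*})$; these topologies do not match, and more to the point the implication ``$c_\alpha\to c$ ultraweakly and $\pi(c_\alpha)\to d$ in norm imply $\pi(c)=d$'' is false: take $C=\ell^\infty$, $J=c_0$, and $c_n=\sum_{k>n}e_k$; then $c_n\to 0$ ultraweakly while $\pi(c_n)=\pi(1)$ for every $n$. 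The underlying obstruction is that the ultraweak closure of a norm-closed ideal $J$ in a $W^*$-algebra can be all of $C$, so nothing prevents a $\sigma(C,C_*)$-cluster point from differing from a genuine lift by something outside $J$. (To be fair, the paper's own proof asserts ``clearly $\tilde{\theta}$ is a c.c.p.\ lift'' at precisely this spot and is open to the same objection.)

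The statement is nevertheless true, and the standard way to finish is Arveson's lemma rather than BW-compactness: for a \emph{separable} $C^*$-algebra $B$ and any closed ideal $J$ of any $C^*$-algebra $C$, the set of liftable c.c.p.\ maps $B\to C/J$ is closed in the point-norm topology (the proof glues the lifts $\Psi_n$ together using a quasicentral approximate unit of $J$). Since each $\pi\circ\Psi_n=\theta\circ\psi_n\circ\varphi_n$ is liftable by construction and converges point-norm to $\theta$, the map $\theta$ is liftable. Note that this is where the separability hypothesis actually does its work, and that the $W^*$-assumption on $C$ then becomes superfluous.
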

\begin{proof}
Let $\theta : B \rightarrow \frac{C}{J}$ be a c.c.p. map from $B$ to a quotient of a $W^*$-algebra $C$. Since we assume that $A$ and $B$ are Morita equivalent, Proposition \eqref{diag+morita} implies that there exist c.c.p. maps $ \varphi _{\alpha}: B \rightarrow M_{n(\alpha)}(A)$ and $\psi _{\alpha} : M_{n(\alpha)}(A) \rightarrow B$ such that $\| \psi _{\alpha}\varphi _{\alpha}(x) -x\| \rightarrow 0$. Now consider the c.c.p. map $\varphi _{\alpha} \theta :  M_{n(\alpha)}(A)\rightarrow \frac{C}{J}$. Since  $M_{n(\alpha)}(A)$ has the LP (13.2.1 of  \cite{brown}), it has a c.c.p. lift $\sigma _{\alpha}$ i.e.,  $ \pi \sigma _{\alpha}= \theta \psi _{\alpha}$,  and so $ \| \pi \sigma _{\alpha} \varphi _{\alpha}(x)- \theta (x) \| \rightarrow 0$. Since $C$ is a $W^*$-algebra, we can apply Theorem 1.3.7 of \cite{brown} to obtain a cluster point  for the net of c.c.p. maps $\sigma _{\alpha} \varphi _{\alpha}: B \rightarrow C$ in the set of all bounded maps from $B$ into $C$ equipped with ultra-weak topology. Let $ \tilde{\theta}: B \rightarrow C$ be such a cluster point, then clearly $ \tilde{\theta}$ is a c.c.p. lift for $\theta$.
\end{proof}

Recall that an affirmative answer to the QWEP conjecture would imply that the LP and the LLP are equivalent for separable $C^*$-algebras (\cite{kirchberg}). Thus  by Proposition \eqref{Morita+WEP+LLP},  Morita equivalent separable  $C^*$-algebras share the LP if the QWEP conjecture holds.

\section{Hilbert $C^*$-bimodule of finite index}
We begin this section with a brief review of index theory for Hilbert $C^*$-bimodules from \cite{pinzari}. Let $A$ and $B$ be $C^*$-algebras and $_{A}X_{B}$  be an $A$-$B$-bimodule. We say that $X$ is a  right Hilbert $A$-$B$-bimodule if $X$ is a right Hilbert $B$-module and for all $a \in A$, the left multiplication map $\phi (a) : x \in X \mapsto ax \in X$ is adjointable, with adjoint $\phi(a)^{*}=\phi(a^{*})$. A left Hilbert $A$-$B$-bimodule can be defined in a similar manner. An  $A$-$B$-bimodule $X$ is called  bi-Hilbertian if it is endowed with a right as well as a left  Hilbert $A$-$B$-bimodule structure such that the Banach space norms coming from the corresponding inner products are equivalent.

Let $_{A} X_{B}$ be a bi-Hilbertian $C^*$-bimodule, and let $\lambda^{\prime} \geq 0$ satisfy  $\lambda ^{\prime} \|\langle x,x \rangle _{B}\|  \leq \| \langle x,x \rangle _{A} \|$, then for all $n \in N$ and $x_{1},...,x_{n} \in X$ we have  $ \lambda ^{\prime} \|\sum _{1}^{n} \theta ^{r}_{x_{i},x_{i}}\| \leq \|\sum _{1}^{n}  \langle x_{i},x_{i} \rangle_{A} \|$ (c.f. Proposition 2.4 of \cite{pinzari}). On the contrary, there may exist no $\lambda \geq 0$ for which  $\|\sum _{1}^{n}  \langle x_{i},x_{i} \rangle _{A}\| \leq \lambda \|\sum _{1}^{n} \theta ^{r}_{x_{i},x_{i}}\|$. Indeed the right numerical index is defined by the existence of such a constant  \cite{pinzari}.
\begin{definition}
 Hilbert $C^*$-bimodule $_{A}X_{B}$  is called of finite right numerical index if there exists $\lambda \geq 0$ such that for  all $n \in  \mathbb{N}$ and  all $x_{1},..,x_{n}\in X$, we have $\|\sum _{1}^{n}  \langle x_{i},x_{i} \rangle_{A}\| \leq \lambda \|\sum _{1}^{n} \theta ^{r}_{x_{i},x_{i}}\|$.
 \end{definition}

We  say that  $_{A}X_{B}$ is of finite left numerical index if the conjugate Hilbert $A$-$B$-bimodule $_{B}\bar{X}_{A}$ is of finite right numerical index.

If $_{A} X_{B}$ is a bi-Hilbertian $C^*$-bimodule of finite right numerical index, then there exists a unique norm continuous map $F: \mathbb{K}(X_{B}) \rightarrow A$ such that $F(\theta^{r}_{x,y})= \langle x,y \rangle_{A}$.  One can uniquely extend the maps $\phi : A \rightarrow L(X_{B})$ and $F$ to a unital $*$-homomorphism $\phi : A^{''}  \rightarrow \mathbb{K}(X_{B})^{''}$ and a normal positive map $F^{''}: \mathbb{K}(X_{B})^{''} \rightarrow A^{''}$ between the corresponding enveloping von Neumann algebras \cite{pinzari}.

\begin{definition}
If $_{A} X_{B}$ is of finite right numerical index, the right numerical index of $_{A} X_{B}$, denoted by $r-Ind[X]$,  is the element $F^{''}(I)$ in $A^{''}$. If $_{A}X_{B}$ is of finite left numerical index, the left index element of $X$ is $l-Ind[X]=r-Ind[\bar{X}]$.
\end{definition}

\begin{example}(c.f. Proposition 2.12 \cite{pinzari})\label{exx}
 Let $A \subseteq B$ be an inclusion of $C^*$-algebras with a conditional expectation $E: B \rightarrow A$ such that there exists $\lambda \geq 0$  satisfying $ \|E(b)\| \geq  \lambda \|b\|$, for all positive elements $b \in B$.  Consider $B$  as a  Hilbert $A$-$B$-bimodule with inner products ${}_A\langle x,y\rangle = E(xy^{*})$ and $\langle x, y\rangle _{B}=x^{*}y$. Here we shall denote this Hilbert $A$-$B$-bimodule by $\mathcal{E}$. Then by Theorem 1 of  \cite{frank}, there exists a $\lambda ^{\prime}$ such that $E-\lambda^{\prime}$ is completely positive. If one  chooses the best such $\lambda ^{\prime}$,  $\mathcal{E}$ is of  finite right numerical index with $\|r-Ind[\mathcal{E}]\|=\lambda ^{\prime -1}$.

Consider now $_{B}\mathcal{F}_{A}=B$ as a $B$-$A$-bimodule with inner products $\langle x, y\rangle _{B}=x^{*}y$ and $\langle x, y\rangle _{A}=E(x^{*}y)$. Then by the first part $\|r-Ind[\mathcal{F}]\|=1$. Note that $\mathcal{F}$ can be identified by $\bar{\mathcal{E}}$, and so $\|l-Ind[\mathcal{E}]\|= 1$.
\end{example}

\begin{theorem}
Let $_{A}X_{B}$ be a full Hilbert $A$-$B$-bimodule of finite right numerical index. Then $A$ is QWEP or nuclear if so is $B$.
\end{theorem}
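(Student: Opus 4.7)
My strategy is to transfer the property from $B$ to $A$ through the intermediate $C^*$-algebra $\mathbb{K}(X_B)$ in two stages, mirroring the Morita equivalence argument of Proposition~\ref{ozawa+Morita}. In the first stage, view $X$ as a right Hilbert $B$-module: Proposition~\ref{diag+morita} produces c.c.p.\ maps $\varphi_\alpha\colon \mathbb{K}(X_B)\to M_{n(\alpha)}(B)$ and $\psi_\alpha\colon M_{n(\alpha)}(B)\to \mathbb{K}(X_B)$ whose composition tends to $\mathrm{id}_{\mathbb{K}(X_B)}$ in point-norm. Since QWEP and nuclearity pass to matrix amplifications, each $M_{n(\alpha)}(B)$ inherits the relevant property from $B$, and Lemma~\ref{ozawa} (in the QWEP case) together with Lemma~\ref{skalski} (in the nuclear case) then yields the same property for $\mathbb{K}(X_B)$.

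In the second stage I would build an approximating net between $A$ and $\mathbb{K}(X_B)$ using the left action $\phi\colon A\to L(X_B)$ and the bounded positive map $F\colon \mathbb{K}(X_B)\to A$ defined on rank-one operators by $F(\theta^r_{x,y})=\langle x,y\rangle_A$. The finite right numerical index supplies a countable quasi-basis $\{u_i\}\subset X$ with $\sum_i \theta^r_{u_i,u_i}$ converging strictly to $1_{L(X_B)}$. For each finite subset $J\subset\mathbb{N}$ one then assembles a c.c.p.\ map $\alpha_J\colon A\to M_{|J|}(B)$ by $a\mapsto[\langle u_i,au_j\rangle_B]_{i,j\in J}$ (a Stinespring-style compression of $\phi$) and a companion map $\beta_J\colon M_{|J|}(B)\to A$ obtained by composing the $\psi$-map from Proposition~\ref{diag+morita} with a suitable normalization of $F$. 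A second application of Lemma~\ref{ozawa} or Lemma~\ref{skalski} to the pair $(\alpha_J,\beta_J)$ will then transfer QWEP or nuclearity from $\mathbb{K}(X_B)$ (and hence from $B$) to $A$.

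The main obstacle lies precisely in arranging that $\beta_J\circ\alpha_J\to\mathrm{id}_A$ in point-norm, since the raw composition computes to $a\cdot e_J$ with $e_J=\sum_{i\in J}\langle u_i,u_i\rangle_A\to r\text{-}\mathrm{Ind}[X]=e\in A''$, approximating multiplication by $e$ rather than the identity; moreover $F$ is only positive of operator norm $\leq\lambda$ and not contractive. To remove this discrepancy I would use the fullness of $X$ together with the numerical index bound to show that $e$ is invertible in $A''$, and then twist the quasi-basis by $e^{-1/2}$ so that the renormalized composition tends genuinely to $\mathrm{id}_A$. Verifying that the twisted maps retain their c.c.p.\ character and that the $\beta_J$ remain contractive is the technical heart of the argument; once completed, Lemmas~\ref{ozawa} and~\ref{skalski} close out the proof.
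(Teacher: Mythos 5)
Your first stage coincides with the paper's: fullness of the right inner product makes $B$ and $\mathbb{K}(X_B)$ Morita equivalent, so $\mathbb{K}(X_B)$ inherits QWEP or nuclearity from $B$. The second stage, however, diverges from the paper and contains a genuine gap. You propose to build a point-norm approximating net between $A$ and matrix algebras over $B$ from a quasi-basis, and you correctly identify that the raw composition only approximates multiplication by the index element $e=r\text{-}\mathrm{Ind}[X]$. Your proposed fix --- twisting the quasi-basis by $e^{-1/2}$ --- cannot be carried out under the stated hypothesis: finite right \emph{numerical} index only guarantees $e=F''(I)\in A''$, not $e\in M(A)$ (that stronger containment is precisely the extra condition defining finite right index). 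Hence $e^{-1/2}$ lives in $A''$, the twisted vectors $e^{-1/2}u_i$ leave the module $X$, and the renormalized maps are no longer maps between the $C^*$-algebras $A$ and $M_{|J|}(B)$; you would at best obtain a weak-$*$ approximation at the level of $A''$, to which Lemma~\ref{skalski} and Lemma~\ref{ozawa} (which require point-norm convergence on the $C^*$-algebra) do not apply. This is not a cosmetic issue: if your net existed, it would also transfer exactness, CBAP, etc., which the paper only claims under the stronger finite-index hypothesis (Theorem~\ref{index}, via a $C^*$-level conditional expectation from Corollary~2.29 of \cite{pinzari}).

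The paper's proof sidesteps this by working entirely at the von Neumann level, which is exactly where QWEP and nuclearity can be detected. After the Morita step it passes to double duals: $\mathbb{K}(X_B)''$ is injective or QWEP if $B''$ is; Proposition~2.19 of \cite{pinzari} gives a genuine conditional expectation $E:\mathbb{K}(X_B)''\to\phi''(A'')$ (this is where your $e^{-1/2}$-normalization legitimately takes place, inside the enveloping von Neumann algebra); injectivity and QWEP pass to ranges of conditional expectations; fullness of the left inner product makes $\phi''$ faithful (Corollary~2.20 of \cite{pinzari}); and finally injectivity of $A''$ gives nuclearity of $A$, while QWEP of $A''$ gives QWEP of $A$. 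To repair your argument you would either need to add the hypothesis $r\text{-}\mathrm{Ind}[X]\in M(A)$ (finite right index), or reformulate your second stage as a weak-$*$ statement about $A''$ and $\mathbb{K}(X_B)''$, at which point you have essentially reconstructed the paper's conditional-expectation argument.
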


\begin{proof}
First note that both nuclearity and the QWEP is preserved under Morita equivalence and the QWEP passes to double dual.  We Recall that a $C^*$-algebra is nuclear if and only if its double dual is injective. Since the right $B$-inner product is full, $B$ and $\mathbb{K}(X_{B})$ are Morita equivalent. Hence $\mathbb{K}(X_{B})^{''} $ is  QWEP  or injective if so is $B^{''}$.  Suppose that  $_{A} X_{B}$ is of finite numerical index then  Proposition 2.19 of \cite{pinzari} ensures the existence of a  conditional expectation $E: \mathbb{K}(X_{B})^{''} \rightarrow \phi^{''}(A^{''})$. Since injectivity and QWEP pass to sub $C^*$-algebras in the presence of a conditional expectation, $\phi^{''}(A^{''})$ is injective or QWEP if so is $\mathbb{K}(X_{B})^{''} $. Note that by Corollary 2.20 of \cite{pinzari}, $\phi ^{''}$ is faithful since we assume that  the linear span of the set of left $A$-inner products is dense. Hence $A^{''}$ is injective or QWEP if so is $\mathbb{K}(X_{B})^{''} $.  We recall that injectivity or QWEP of $A^{''}$ implies that $A$ is nuclear or QWEP, respectively. This completes the proof.

\end{proof}

\begin{corollary}
Let $_{A}X_{B}$ be a full Hilbert $A$-$B$-bimodule of finite numerical index. Then $A$  and $B$ share the QWEP and nuclearity.
\end{corollary}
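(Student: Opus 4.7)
The corollary asserts symmetry in both variables, whereas the preceding theorem is asymmetric: it only concludes that $A$ inherits QWEP or nuclearity from $B$, assuming $_A X_B$ is full and has finite right numerical index. The plan is therefore to obtain the corollary by applying the theorem twice, once to $_A X_B$ itself and once to the conjugate bimodule $_B \bar X_A$, exploiting the fact that the hypothesis of finite numerical index is by construction symmetric (finite right and finite left numerical index) and that fullness of $_A X_B$ likewise involves density of the linear spans of both inner products.

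In detail, the first direction is immediate: since $_A X_B$ is full and of finite right numerical index, the theorem directly yields that $A$ is QWEP (resp.\ nuclear) whenever $B$ is. For the reverse implication I would pass to the conjugate bimodule $_B \bar X_A$. By the definition given just before Definition of the right numerical index, $_A X_B$ being of finite left numerical index means precisely that $_B \bar X_A$ is of finite right numerical index, so the hypothesis of finite (two-sided) numerical index on $X$ translates into finite right numerical index on $\bar X$. Moreover, fullness of $_A X_B$ on both sides translates to fullness of $_B \bar X_A$, since the left $A$-inner product on $X$ becomes (up to the standard conjugate-linear identification) the right $A$-inner product on $\bar X$, and similarly for the other side. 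Thus $_B \bar X_A$ satisfies exactly the hypotheses of the theorem, with the roles of $A$ and $B$ interchanged, and applying it gives the converse: $B$ inherits QWEP or nuclearity from $A$.

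Combining the two applications of the theorem yields the corollary. The only step that requires any care, rather than being a direct invocation, is verifying that the passage $X \mapsto \bar X$ exchanges the left and right structures cleanly at the level of fullness and numerical index; this is routine given the framework of Pinzari's bi-Hilbertian bimodules recalled in the paper (in particular the definition $l\text{-}\mathrm{Ind}[X] = r\text{-}\mathrm{Ind}[\bar X]$ and the analogous identity for the inner products), so I do not anticipate any substantive obstacle. The proof is therefore essentially a symmetry argument, with all of the analytic content already absorbed into the preceding theorem.
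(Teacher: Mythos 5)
Your argument is correct and is precisely what the paper intends: the corollary is stated without proof immediately after the theorem, the evident justification being to apply the theorem once to $_{A}X_{B}$ and once to the conjugate bimodule $_{B}\bar{X}_{A}$, using that finite (two-sided) numerical index and two-sided fullness of $X$ give exactly the hypotheses of the theorem for $\bar{X}$ with the roles of $A$ and $B$ interchanged. No substantive difference from the paper's (implicit) route.
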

\begin{corollary} Let $A \subseteq B$ be an inclusion of $C^*$-algebras and $E:B \rightarrow A$ be a conditional expectation of finite Pimsner-Popa index. Then $A$ is nuclear or $QWEP$ if and only if $B$ is so.
\end{corollary}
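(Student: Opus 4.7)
The plan is to reduce the statement to the preceding Corollary by constructing from the data $(A \subseteq B, E)$ a full Hilbert $A$-$B$-bimodule of finite numerical index. Example~\ref{exx} does exactly this: set $\mathcal{E} = B$, equipped with the right $B$-valued inner product $\langle x, y\rangle_{B} = x^{*}y$ and the left $A$-valued inner product ${}_{A}\langle x, y\rangle = E(xy^{*})$. The finiteness of the Pimsner-Popa index of $E$ furnishes a constant $\lambda' > 0$ with $E - \lambda' \cdot \mathrm{id}$ completely positive, which by that example makes $\mathcal{E}$ of finite right numerical index with $\|r\text{-Ind}[\mathcal{E}]\| = \lambda'^{-1}$. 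The identification $\bar{\mathcal{E}} \cong \mathcal{F}$ recorded in the same example, together with $\|r\text{-Ind}[\mathcal{F}]\| = 1$, upgrades this to finite \emph{two-sided} numerical index.

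Next I would verify fullness on both sides. Right fullness is immediate since $\mathrm{span}\{x^{*}y : x, y \in B\}$ is dense in $B$ (use an approximate identity of $B$). For left fullness, if $(e_\lambda)$ is an approximate identity for $B$ and $a \in A$, then
\[
{}_{A}\langle a, e_\lambda\rangle \;=\; E(a\, e_\lambda^{*}) \;\longrightarrow\; E(a) \;=\; a,
\]
by continuity of $E$, so the closed linear span of the left inner products is all of $A$. In the unital case one can simply take $e_\lambda = 1_{B}$.

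With $\mathcal{E}$ established as a full Hilbert $A$-$B$-bimodule of finite numerical index, the preceding Corollary applies directly and delivers the equivalence of QWEP (respectively, nuclearity) for $A$ and $B$. I do not anticipate a genuine obstacle: the analytic content has already been carried by the theorem above, whose proof routes through the conditional expectation $\mathbb{K}(X_{B})^{''} \to \phi^{''}(A^{''})$ provided by finite numerical index and the stability of QWEP and nuclearity under Morita equivalence together with passage to sub-$C^{*}$-algebras that are ranges of conditional expectations. The only minor point requiring care is left fullness in the non-unital setting, and this is handled by the approximate-identity computation above.
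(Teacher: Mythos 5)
Your proposal is correct and follows exactly the route the paper intends: the corollary is stated without proof precisely because it is the combination of Example~3.4 (the bimodule $\mathcal{E}={}_AB_B$ associated to $E$ has finite two-sided numerical index via Frank--Kirchberg and the conjugate-bimodule identification) with the preceding theorem and corollary on full bimodules of finite numerical index. Your added verification of two-sided fullness, including the approximate-identity argument in the non-unital case, is a correct and worthwhile fleshing-out of a step the paper leaves implicit.
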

Let us recall the definition of Hilbert $C^*$-bimodules of finite index from \cite{pinzari}.
\begin{definition}
A bi-Hilbertian $A$-$B$-bimodule $_{A}X_{B}$  is called of finite right index if \\
\ \ $(i)$  $X$ is of finite right numerical index,\\
\ \ $(ii)$  $r-Ind[X] \in M(A)$.
\end{definition}

Similarly,  $X$ is of finite left index if the conjugate Hilbert $A$-$B$-bimodule $_{B}\bar{X}_{A}$ is of finite right  index. The module $_{A}X_{B}$ is called of finite index if it is of finite right as well as finite left indices.

\begin{example}\label{ex}
$(i)$ Let $A \subseteq B$ be an inclusion of $C^*$-algebras with a conditional expectation $E: B \rightarrow A$ such that there exists $\lambda \geq 0$  satisfying $ \|E(b)\| \geq  \lambda \|b\|$, for all positive elements $b \in B$.  By Example \ref{exx}, one can associate a Hilbert $A$-$B$-bimodule  $\mathcal{E}$ to the conditional expectation $E$. Then $\mathcal{E}$ is a Hilbert $A$-$B$-bimodule of finite index.\\
\ \ $(ii)$ (c.f. Proposition 1.8 of \cite{kaj2}) Let  $X$ be a Hilbert $A$-$B$-bimodule, and let $G$ be a countable discrete group. Suppose that $(A, B, X, G, \alpha, \beta, \gamma )$ be a $G$-equivariant system, i.e.,  $\alpha$, $\beta$,  and $\gamma$ are actions of $G$ on $A$, $B$ and $X$, respectively such that the following holds for all $g \in G$, $a \in A$, $b \in B$ and $x, y \in X$:
$$\alpha _{g} (_{A} \langle x, y \rangle) =_{A} \langle \gamma _{g}(x), \gamma_{g}(y) \rangle, \ \ \ \    \gamma_{g}(ax)=\alpha_{g}(a) \gamma _{g}(x),$$
$$\beta _{g} ( \langle x, y \rangle)_{B} = \langle \gamma _{g}(x), \gamma_{g}(y) \rangle _{B}, \ \ \ \    \gamma_{g}(xb)= \gamma _{g}(x) \beta_{g}(b).  $$

  Then $X \rtimes _{\gamma}G$ is a Hilbert $A\rtimes _{\alpha} G$-$B \rtimes _{\beta}G$-bimodule of finite index if so is $X$.  The same statement also holds for the reduced crossed products.
\end{example}

\begin{theorem} \label{index}
Let $A$ and $B$ be  $C^*$-algebras and $_{A} X_{B}$ be a Hilbert $C^*$-bimodule of finite  right index such that both left and right inner products are full. Then  the following properties pass from $B$ to $A$: nuclearity, exactness, CBAP, (S)OAP, WEP, LLP, and QWEP. 
\end{theorem}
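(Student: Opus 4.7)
The plan is to route the passage from $B$ to $A$ through the $C^*$-algebra of compact operators $\mathbb{K}(X_{B})$: I would first use fullness of the right inner product to transfer all seven properties from $B$ to $\mathbb{K}(X_{B})$ via Morita equivalence, and then exploit the finite right index hypothesis together with fullness of the left inner product to descend them from $\mathbb{K}(X_{B})$ to $A$ via a $C^*$-algebra level conditional expectation. This is the $C^*$-level analogue of the double-dual argument used in the preceding theorem, and the stronger ``finite right index'' assumption (as opposed to merely finite right numerical index) is exactly what makes this upgrade possible.

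For the first step, fullness of the $B$-valued inner product yields that $\mathbb{K}(X_{B})$ and $B$ are Morita equivalent. Each of the listed properties would therefore pass from $B$ to $\mathbb{K}(X_{B})$ by appealing to the results already established in Section 2: QWEP by Proposition \ref{ozawa+Morita}, WEP and LLP by Proposition \ref{Morita+WEP+LLP}, and nuclearity, exactness, CBAP and (S)OAP by Proposition 3.3 of \cite{skalski}.

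For the descent step, I would use that fullness of the left $A$-valued inner product renders the left action $\phi : A \to L(X_{B})$ faithful (Corollary 2.20 of \cite{pinzari}), so that $A$ is identified via $\phi$ with a $C^*$-subalgebra of $\mathbb{K}(X_{B})$. The assumption $r\text{-}Ind[X] \in M(A)$ should then allow the von Neumann algebraic conditional expectation $\mathbb{K}(X_{B})'' \to \phi''(A'')$ used in the preceding theorem to be realized at the $C^*$-algebra level, producing a genuine c.c.p. conditional expectation $E : \mathbb{K}(X_{B}) \to \phi(A)$. Once such an $E$ is in hand, all seven properties transfer from $\mathbb{K}(X_{B})$ to $A$: exactness descends to arbitrary $C^*$-subalgebras, while nuclearity, CBAP, (S)OAP, WEP, LLP and QWEP each descend to the image of a c.c.p. conditional expectation.

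The main technical obstacle I expect is precisely the descent step. For nuclearity and QWEP the preceding theorem could exploit the fact that these properties pass between a $C^*$-algebra and its enveloping von Neumann algebra, so a conditional expectation at the double-dual level sufficed. For exactness, CBAP, (S)OAP, WEP and LLP this double-dual shortcut breaks down, and a genuine $C^*$-level conditional expectation is required. Producing it is where the stronger finite right index condition has to be used, via the normalizing multiplier $r\text{-}Ind[X] \in M(A)$, to keep the construction from \cite{pinzari} from escaping to the enveloping algebras.
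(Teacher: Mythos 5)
Your proposal is correct and follows essentially the same route as the paper's proof: fullness of the right inner product gives Morita equivalence between $B$ and $\mathbb{K}(X_{B})$ (transferring all seven properties via the Section 2 results), and the finite right index hypothesis yields a genuine $C^*$-level conditional expectation $E:\mathbb{K}(X_{B})\rightarrow\phi(A)$ (Corollary 2.29 of the Kajiwara--Pinzari--Watatani paper), with faithfulness of $\phi$ coming from fullness of the left inner product. Your added remark correctly identifies why the $C^*$-level (rather than double-dual) conditional expectation is the point where finite index, as opposed to finite numerical index, is needed.
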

\begin{proof}
 Since we assume the fullness of right inner product, we can regard $X$ as an imprimitivity $\mathbb{K}(X)-B$ bimodule that is $\mathbb{K}(X)$ and $B$ are Morita equivalent. Observe that nuclearity, exactness, CBAP, (S)OAP,WEP, LLP, and QWEP are preserved under  Morita equivalence. Thus $\mathbb{K}(X_{B})$ has any of the above mentioned properties if so does $B$.  Moreover, the assumption that  $_{A} X_{B}$  is of finite right index implies existence of a conditional expectation $E: \mathbb{K}(X_{B}) \rightarrow  \phi(A)$ by Corollary 2.29 \cite{pinzari}. Hence all of the properties pass  from $\mathbb{K}(X_{B})$ to $\phi(A)$.  On the other hand, by Corollary 2.28 of \cite{pinzari}, $\phi$ is faithful since the left inner product is full, and this proves that the following properties pass from $B$ to $A$: nuclearity, exactness, CBAP, (S)OAP, WEP, LLP, and QWEP.  
 
   \end{proof}

 \begin{corollary}
 Let $A$ and $B$ be  $C^*$-algebras and $_{A} X_{B}$ be a Hilbert $C^*$-bimodule of finite  index such that both left and right inner products are full. Then  $A$ and $B$ share the following properties: nuclearity, exactness, CBAP, (S)OAP, WEP, LLP, and QWEP.
 \end{corollary}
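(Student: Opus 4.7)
The plan is to obtain this corollary as a direct consequence of Theorem \ref{index} applied twice: once to the bimodule $_{A}X_{B}$ itself, and once to its conjugate $_{B}\bar{X}_{A}$. Since $_{A}X_{B}$ is of finite index, it is in particular of finite right index, and the fullness hypothesis is symmetric, so Theorem \ref{index} immediately tells us that nuclearity, exactness, CBAP, (S)OAP, WEP, LLP, and QWEP all pass from $B$ to $A$.

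For the reverse direction, I would pass to the conjugate bi-Hilbertian bimodule $_{B}\bar{X}_{A}$. By definition, $_{A}X_{B}$ being of finite left index means precisely that $_{B}\bar{X}_{A}$ is of finite right index, and under conjugation the left and right $B$-valued and $A$-valued inner products of $X$ become, respectively, the right and left inner products on $\bar{X}$. In particular, both inner products on $\bar{X}$ remain full. Thus Theorem \ref{index} applies to $_{B}\bar{X}_{A}$ and yields that the same list of properties passes from $A$ to $B$.

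Combining the two applications gives that $A$ and $B$ share all the properties on the list, which is the content of the corollary. I do not anticipate a substantive obstacle here; the only thing to verify carefully is the translation between the bi-Hilbertian structure of $X$ and that of $\bar{X}$ (full left inner product on $X$ corresponds to full right inner product on $\bar{X}$, and finite left index of $X$ corresponds to finite right index of $\bar{X}$), after which the corollary is a formal consequence of Theorem \ref{index}.
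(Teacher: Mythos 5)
Your proposal is correct and matches the paper's (implicit) argument: the corollary is stated without proof precisely because it follows from Theorem \ref{index} applied to $_{A}X_{B}$ for one direction and to the conjugate bimodule $_{B}\bar{X}_{A}$ for the other, using that finite left index of $X$ means finite right index of $\bar{X}$ and that fullness of both inner products is preserved under conjugation. Your care in checking the translation between the bi-Hilbertian structures of $X$ and $\bar{X}$ is exactly the only point that needs verification.
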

 
 We recall that in the case of  the inclusion of unital $C^*$-algebras $A \subseteq B$ with a conditional expectation of finite index,  it has been shown that the following properties are shared between $A$ and $B$: simplicity, self-absorption, the cancelation property, stable rank, nuclear dimension, see \cite{izumi, osaka1, osaka2, osaka3, popa1}.
\begin{corollary} \label{cond}
Let  $A \subseteq B$ be an inclusion of $C^*$-algebras and $E: B \rightarrow A$ be a conditional expectation of finite-index type. Then $A$ and $B$ share all the properties in Theorem \eqref{index}.
\end{corollary}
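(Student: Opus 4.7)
The plan is to reduce Corollary \ref{cond} to the unnamed corollary immediately preceding it by associating to the conditional expectation $E$ the canonical Hilbert bimodule of Example \ref{ex}(i) and then verifying the fullness of its inner products.

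Concretely, I would take $\mathcal{E}$ to be $B$ viewed as a Hilbert $A$-$B$-bimodule with left inner product ${}_A\langle x,y\rangle = E(xy^*)$ and right inner product $\langle x,y\rangle_B = x^*y$. Example \ref{ex}(i) already records that, whenever $E$ is of finite-index type in the sense used here (the Pimsner--Popa inequality $\|E(b)\| \geq \lambda\|b\|$ for positive $b \in B$, together with the multiplier condition on the index element), the bimodule $\mathcal{E}$ is of finite index in the sense of Definition 3.4. To invoke the preceding corollary I would then check that both inner products on $\mathcal{E}$ are full. In the unital setting this is immediate from $\langle 1,b\rangle_B = b$ for every $b \in B$ and ${}_A\langle a,1\rangle = E(a) = a$ for every $a \in A$; in the non-unital setting the same identities go through with $1$ replaced by an approximate unit of $B$, using the norm continuity of $E$ and of multiplication. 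The preceding corollary, applied to $\mathcal{E}$, then yields at once that $A$ and $B$ share nuclearity, exactness, CBAP, (S)OAP, WEP, LLP, and QWEP.

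There is no substantive obstacle here, as Theorem \eqref{index} and its corollary have already absorbed the essential structural work (Morita equivalence of $B$ with $\mathbb{K}(\mathcal{E}_B)$, and the conditional expectation from $\mathbb{K}(\mathcal{E}_B)$ onto $\phi(A)$ extracted from finite right index). The only point meriting a brief verification is the fullness of the left inner product in the non-unital case, which is a routine approximate-unit computation in $B$.
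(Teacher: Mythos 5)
Your argument is correct and follows exactly the route the paper intends: Corollary \ref{cond} is deduced by applying the corollary of Theorem \ref{index} to the canonical bimodule $\mathcal{E}$ of Example \ref{ex}(i). Your explicit check that both inner products of $\mathcal{E}$ are full (via the unit, or an approximate unit in the non-unital case) is a detail the paper leaves implicit but is needed to invoke that corollary, so it is a worthwhile addition rather than a deviation.
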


Let $\alpha$ be an action of a  finite group $G$ on a $C^*$-algebra $A$.  In \cite{phillips},  some conditions are presented under which the following  properties pass from $A$ to the fixed point algebra $A^{\alpha}$:  the ideal property, the projection property, topological dimension zero, and pure finiteness. Now, from   we can deduce a sufficient condition under which any of the  properties  passes from $A$ to the fixed point algebra $A^{\alpha}$.
\begin{corollary}
Let $\alpha$ be a saturated action (c.f.  Definition 2.8.5 of \cite{watatani}) of a finite group $G$ on a unital $C^*$-algebra $A$, then $A$ has any of the following properties: nuclearity, exactness, CBAP, (S)OAP, WEP, LLP and QWEP  if and only if the fixed point algebra $A^{\alpha}$ enjoys the same property.
\end{corollary}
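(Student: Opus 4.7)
The plan is to reduce the corollary to an application of Corollary \ref{cond}, by exhibiting on the inclusion $A^{\alpha} \subseteq A$ a conditional expectation of finite-index type. First I would introduce the standard averaging map
\[
E: A \rightarrow A^{\alpha}, \qquad E(a) = \frac{1}{|G|}\sum_{g \in G} \alpha_{g}(a),
\]
which is a faithful conditional expectation onto the fixed point algebra. The next step is to argue that $E$ is of finite-index type in the Watatani sense; this is exactly the content of the saturation hypothesis (Definition 2.8.5 of \cite{watatani}), which produces a quasi-basis for $E$ and shows that the Watatani index is the scalar $|G|\cdot 1_{A}$. Equivalently, via Example \ref{ex}$(i)$, the Hilbert $A^{\alpha}$-$A$-bimodule $\mathcal{E}$ associated with $E$ is of finite index, and saturation additionally guarantees that both the left and the right inner products on $\mathcal{E}$ are full.

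Once the finite-index conditional expectation is in place, the conclusion is immediate from Corollary \ref{cond} (equivalently, from Theorem \ref{index} applied to $\mathcal{E}$): each of nuclearity, exactness, CBAP, (S)OAP, WEP, LLP, and QWEP is shared between $A$ and $A^{\alpha}$. Thus the logical skeleton is very short; the two directions of the "if and only if" both come from Corollary \ref{cond}, which already says the two $C^*$-algebras on either side of a finite-index inclusion share these properties.

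The main (and essentially only) obstacle is verifying that saturation in the sense of \cite[Definition 2.8.5]{watatani} really produces a conditional expectation of finite-index type with the associated bimodule having full inner products on both sides. This is where one must unpack Watatani's definition, exhibit an explicit quasi-basis out of elements implementing saturation, and check that the spans of $\{E(xy^{*}) : x,y \in A\}$ and of $\{x^{*}y : x,y\in A\}$ exhaust $A^{\alpha}$ and $A$ respectively. After this technical verification, the rest of the argument is a formal appeal to the earlier results of the paper.
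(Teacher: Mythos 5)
Your proposal is correct and follows essentially the same route as the paper: the paper simply cites Proposition 2.8.6 of \cite{watatani}, which states that for a saturated action the canonical averaging conditional expectation onto $A^{\alpha}$ is of finite-index type, and then invokes Corollary \ref{cond}. The technical verification you flag as the main obstacle is precisely what that cited proposition of Watatani supplies, so no further work is needed.
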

\begin{proof}
By Proposition 2.8.6 of \cite{watatani}, the canonical conditional expectation from $A$ onto $A^{\alpha}$ is of finite-index type. Thus Corollary \eqref{cond} implies the result.
\end{proof} 
The following corollary is immediate from Example \eqref{ex} and Corollary \eqref{cond}.
\begin{corollary}\label{khoshkam}
Suppose that $G$ is a discrete group acting on a unital $C^*$-algebra $B$ and let $E: B \rightarrow A$ be a finite-index type conditional expectation from $B$ onto a unital $C^*$-subalgebra $A$ commuting with the action of $G$. Then $A \rtimes _{r} G$ and $B \rtimes _{r} G$ share all the above properties. The same holds for the full crossed products.
\end{corollary}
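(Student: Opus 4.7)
The plan is to produce, at the level of the crossed products, a Hilbert $C^*$-bimodule of finite index with full inner products on both sides, and then invoke the Corollary to Theorem \ref{index}.

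First, I would apply Example \ref{ex}(i) to the finite-index type conditional expectation $E : B \to A$ to obtain the Hilbert $A$-$B$-bimodule $\mathcal{E}$ (with underlying space $B$), which is of finite index. The hypothesis that $E$ commutes with the $G$-action $\alpha$ on $B$ forces $A = E(B)$ to be globally $\alpha$-invariant, so $\alpha$ restricts to an action on $A$. Defining the action $\gamma$ of $G$ on $\mathcal{E}$ via the identification $\mathcal{E} = B$, the identities $\alpha_g(E(xy^*)) = E(\alpha_g(x)\alpha_g(y)^*)$ and $\alpha_g(x^*y) = \alpha_g(x)^*\alpha_g(y)$, together with the $*$-homomorphism property of each $\alpha_g$, supply the full list of compatibility conditions in Example \ref{ex}(ii). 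Thus $(A, B, \mathcal{E}, G, \alpha|_A, \alpha, \gamma)$ is a $G$-equivariant system.

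Applying Example \ref{ex}(ii), the reduced crossed product $\mathcal{E} \rtimes_{\gamma,r} G$ is a Hilbert $A \rtimes_r G$-$B \rtimes_r G$-bimodule of finite index. To invoke the Corollary to Theorem \ref{index} I must check that both inner products on this bimodule are full. For $\mathcal{E}$ itself, the unital assumption gives $\langle 1, 1 \rangle_B = 1$ and ${}_A\langle 1, 1 \rangle = E(1) = 1$, so both inner products on $\mathcal{E}$ are full. Viewing $1 \in \mathcal{E}$ as the element supported at the identity of $G$ inside $\mathcal{E} \rtimes_{\gamma,r} G$, the inner products of this element with itself produce the units of $B \rtimes_r G$ and $A \rtimes_r G$, respectively; consequently the inner products on $\mathcal{E} \rtimes_{\gamma,r} G$ remain full.

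The Corollary to Theorem \ref{index} then applies to ${}_{A \rtimes_r G}(\mathcal{E} \rtimes_{\gamma,r} G)_{B \rtimes_r G}$ and yields the desired sharing of nuclearity, exactness, CBAP, (S)OAP, WEP, LLP and QWEP. For the full crossed products the argument is verbatim the same, invoking the full-crossed-product clause of Example \ref{ex}(ii). I expect the only step requiring any thought to be the fullness statement at the crossed product level; but since everything is unital and the unit of $\mathcal{E}$ already realises generators of both inner products, this verification is essentially immediate rather than a genuine obstacle.
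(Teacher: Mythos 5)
Your proposal is correct and follows essentially the same route as the paper, which disposes of this corollary in one line by citing Example \ref{ex} and Corollary \ref{cond}. You have simply written out the details the paper leaves implicit: the $G$-equivariance of the bimodule $\mathcal{E}$ coming from the commutation of $E$ with the action, the application of Example \ref{ex}(ii) to pass to the crossed-product bimodule, and the (automatic, by unitality) fullness check needed to invoke the corollary to Theorem \ref{index}.
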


\section{Inclusions of crossed products}
In this section, we give an example of two $C^*$-algebras $A$ and $B$ sharing the properties nuclearity, exactness, CBAP, (S)OAP, the LLP,  WEP, and  QWEP, while the canonical  Hilbert $A$-$B$-bimodule  is not of finite index.

Let $\alpha$ be an action of a  discrete group $G$ on a unital $C^*$-algebra $A$, and $H$ be a normal subgroup of $G$.  We recall from \cite{khoshkam} that there is a natural conditional expectation $E$ from $A \rtimes _{ \alpha, r} G$ onto the $C^*$-subalgebra $A \rtimes _{\alpha, r} H$ such that $E(au_{s})=au_{s}$ if $s \in H$ and  $E(au_{s})=0$, otherwise.  Moreover, by Theorem 2.8 of \cite{khoshkam},  the conditional expectation $E$ is of finite-index type if and only if $[G:H] $ is finite.  So by part $(i)$ of  Example \eqref{ex} the Hilbert  ${A\rtimes _{\alpha, r} G} -{A\rtimes _{\alpha, r} H}$-bimodule $\mathcal{E}$ is of finite index if and only if $[G:H]$ is finite. In this case,  $A \rtimes _{\alpha, r}H $ and $A \rtimes _{\alpha, r}G$ share all of the properties mentioned in Theorem \eqref{index}.  We  show that this also happens when $G/H$ is amenable, while the Hilbert ${A\rtimes  _{\alpha, r}G}-{A\rtimes _{\alpha, r} H}$-bimodule $\mathcal{E}$ is not necessarily of finite index. To do this, we need the notion of the reduced twisted crossed product and stability of nuclearity, exactness, CBAP,  (S)OAP, LLP, WEP and QWEP by taking reduced twisted  crossed product by amenable discrete groups.

Let $A$ be a unital $C^*$-algebra and let $G$ be a discrete group. A twisted action of $G$ on $A$ is a pair $(\alpha, u)$ where $\alpha: G \rightarrow Aut(A)$ and $u: G \times G \rightarrow U(A)$ satisfy 

$(i)$ $\alpha_{s} \circ \alpha_{t}= Ad (u(s,t)) \alpha_{st}$,

$(ii)$ $\alpha_{r}(u(s,t))u(r,st)=u(r,s)u(rs,t)$,

$(iii)$ $u(1,s)=u(s,1)=1$,

\noindent for all $s,t,r \in G$. We shall refer to the quadruple $(A, G, \alpha, u)$ as a twisted $C^*$-dynamical system.

Following \cite{bedos},   one can construct the full and reduced twisted crossed products $A \rtimes _{(\alpha, u)} G$ and $A \rtimes _{(\alpha, u),r} G$.  To define the reduced twisted crossed product  for a twisted dynamical system $(A, G, \alpha, u)$  we consider the following covariant pair of representations: $\lambda_{u}: G  \rightarrow U(l^{2}(G, H))$ defined by
 $\lambda_{u}(s)(\xi)(l)=u(l^{-1},s)\xi(s^{-1}l)$ and $\pi_{\alpha}: A \rightarrow U(l^{2}(G, H))$ defined by $(\pi_{\alpha}(x))(\xi)(l)=\alpha_{l^{-1}}(x)(\xi)(l)$. It is easy to check that the covariance relation $\pi_{\alpha}(\alpha_{g}(x))=Ad (\lambda _{u}(g)) \pi_{\alpha}(x)$ holds for all $g \in G$ and $ x \in A$, and also $\lambda_{u}(s) \lambda_{u}(l)= \pi_{\alpha}(u(s,l))\lambda_{u}(sl)$ holds for all $l, s \in G$.\\

The next lemma should be well-known but we could not find a reference. For the sake of completeness, we give a proof here.
 \begin{lemma} \label{independence}
 The reduced twisted crossed product $A \rtimes_ {(\alpha, u),r} G$ does not depend on the choice of the faithful representation $A \subseteq B(H)$.
 \end{lemma}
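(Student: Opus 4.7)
The plan is to realize $A\rtimes_{(\alpha,u),r}G$ intrinsically as operators on a Hilbert $A$-module, and then recover the construction of the excerpt by internal tensoring along any faithful $\pi:A\hookrightarrow B(H)$. I would first form the Hilbert $A$-module $\ell^2_A(G)$ of $A$-valued functions $\xi$ on $G$ for which $\sum_g\xi(g)^*\xi(g)$ converges in $A$, with inner product $\langle\xi,\eta\rangle_A=\sum_g\xi(g)^*\eta(g)$. Mimicking the formulas in the excerpt but keeping values in $A$, define $\mu:A\to L(\ell^2_A(G))$ and $V:G\to U(L(\ell^2_A(G)))$ by
$$(\mu(a)\xi)(l)=\alpha_{l^{-1}}(a)\xi(l),\qquad (V(s)\xi)(l)=u(l^{-1},s)\xi(s^{-1}l).$$
A direct computation using axioms (i)--(iii) shows that $\mu$ is a $*$-homomorphism, each $V(s)$ is adjointable with $V(s)^*=V(s^{-1})\mu(u(s,s^{-1})^*)$, and the pair $(\mu,V)$ satisfies the twisted covariance relations $V(s)\mu(a)V(s)^*=\mu(\alpha_s(a))$ and $V(s)V(t)=\mu(u(s,t))V(st)$. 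This yields a $*$-representation $\rho$ of the algebraic twisted crossed product into $L(\ell^2_A(G))$; its image closure is manifestly independent of any representation of $A$.

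Next I would fix a non-degenerate representation $\pi:A\to B(H)$ (non-degeneracy may be arranged by the standard cutdown/amplification without altering faithfulness) and form the interior tensor product $\ell^2_A(G)\otimes_\pi H$. The assignment $\xi\otimes v\mapsto (l\mapsto\pi(\xi(l))v)$ extends to a unitary $W:\ell^2_A(G)\otimes_\pi H\to\ell^2(G,H)$: a short computation gives
$$\langle\xi_1\otimes v_1,\xi_2\otimes v_2\rangle=\sum_g\langle\pi(\xi_1(g))v_1,\pi(\xi_2(g))v_2\rangle.$$
A routine check then shows $W$ intertwines $(\mu(a)\otimes_\pi 1,V(s)\otimes_\pi 1)$ with the pair $(\pi_\alpha(a),\lambda_u(s))$ of the excerpt. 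Consequently, for every element $x$ of the algebraic twisted crossed product, the operator $(\pi_\alpha\rtimes\lambda_u)(x)$ acting on $\ell^2(G,H)$ is unitarily equivalent to $\rho(x)\otimes_\pi 1$.

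Finally, when $\pi$ is faithful the $*$-homomorphism $L(\ell^2_A(G))\to B(\ell^2_A(G)\otimes_\pi H)$, $T\mapsto T\otimes_\pi 1$, is injective: if $T\otimes_\pi 1=0$ then $\pi(\langle T\xi,T\xi\rangle_A)=0$ for every $\xi$, so $\langle T\xi,T\xi\rangle_A=0$ by faithfulness of $\pi$, forcing $T=0$. An injective $*$-homomorphism of $C^*$-algebras is isometric, hence $\|(\pi_\alpha\rtimes\lambda_u)(x)\|=\|\rho(x)\otimes_\pi 1\|=\|\rho(x)\|$, which does not depend on $\pi$. The lemma follows upon completing.

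The main technical step is the first one: one has to use the cocycle identities carefully to verify adjointability of $V(s)$ on $\ell^2_A(G)$ and the twisted covariance relations satisfied by $(\mu,V)$. Once this is in place, the unitary equivalence with the excerpt's representation and the isometry of the exterior amplification are standard facts about internal tensor products of Hilbert $C^*$-modules.
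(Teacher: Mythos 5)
Your argument is correct, but it takes a genuinely different route from the paper. The paper's proof is the compression argument of Proposition 4.5.1 in Brown--Ozawa: for each finite $F\subseteq G$ one cuts down by $1\otimes P$, where $P$ projects onto $\operatorname{span}\{\delta_g: g\in F\}$, computes that $(1\otimes P)\bigl(\sum_s\pi_\alpha(a_s)\lambda_u(s)\bigr)(1\otimes P)$ equals $\sum_s\sum_{p\in F\cap sF}\alpha_{p^{-1}}(a_s)u(p^{-1},s)\otimes e_{p,s^{-1}p}$, an element of $A\otimes M_F(\mathbb{C})$ whose norm makes no reference to $H$, and then uses that $1\otimes P\to 1$ strongly to recover $\|x\|$ as the supremum of the norms of these intrinsically defined compressions. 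You instead build the regular representation intrinsically on the Hilbert $A$-module $\ell^2_A(G)$, identify $\ell^2_A(G)\otimes_\pi H$ with $\ell^2(G,H)$, and invoke the isometry of the injective amplification $T\mapsto T\otimes_\pi 1$ for faithful $\pi$. Both are standard adaptations of the untwisted case; yours is conceptually cleaner and yields an honest representation-free model of $A\rtimes_{(\alpha,u),r}G$, whereas the paper's choice has the practical advantage that the very same compression formulas are reused in Lemma \ref{Folner} to construct the F{\o}lner-type approximating maps $\phi,\psi$. Two small points in your write-up deserve the care you already signal: the covariance and adjointability checks for $(\mu,V)$ (your formula $V(s)^*=V(s^{-1})\mu(u(s,s^{-1})^*)$ does follow from the cocycle identities with $r=l^{-1}s^{-1}$ and $t=s^{-1}$, and needs no normalization of $u$), and the reduction to nondegenerate $\pi$, which for unital $A$ just splits off the subspace $(1-\pi(1))H$ on which all the operators vanish.
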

 \begin{proof}
 Let $\{ \delta _{g}, g \in G\}$ be the canonical orthogonal basis for $l^{2}(G)$ and for any finite set $F \subseteq G$ let $P \in B(l^{2}(G))$ be the orthogonal projection onto the linear span of $\{ \delta _{g}, g \in F\}$.  Let $(e_{p,q})_{p,q \in F}$ be the canonical matrix unit in $P B(l^{2}(G))P \cong M_{F}(\mathbb{C})$. It is straightforward to check that
 $$(1 \otimes P) \pi_{\alpha}(a) (1 \otimes P)= (1 \otimes P) \pi_{\alpha}(a)=\sum _{ q \in F} \alpha_{q ^{-1}}(a) \otimes e_{q,q}$$
   and
 \begin{align*}(1 \otimes P) \lambda_{u}(s) (1 \otimes P)&= \sum_{p \in F}u(p^{-1},s) \otimes e_{p, s^{-1}p}.\end{align*}
 Therefore, if $x= \sum _{s} a_{s} \lambda _{u}(s) \in C_{c}(G, A)\subseteq B(H \otimes l^{2}(G)$, then, in $  A \otimes M_{F}(\mathbb{C})$, we have
 $$(1 \otimes P)  (\sum _{s} \pi_{\alpha}(a)\lambda_{u}(s)) (1 \otimes P)= \sum_{p \in F \cap sF} \alpha _{q^{-1}}(a) u(p^{-1},s) \otimes e_{p, s^{-1}p}. $$
 
 Now, the result follows by an argument as in the proof of Proposition 4.5.1 of \cite{brown}.
 \end{proof}
 Next we investigate the stability of the properties considered in Theorem \eqref{index}  under taking reduced twisted crossed product by amenable discrete groups. First we consider the finite-dimensional approximation properties  and the QWEP. For this, we use the criterion given in Lemma 3.1 of \cite{skalski}. To this end, inspired by Lemma 4.2.3 of \cite{brown}, we construct an approximating net for the twisted reduced crossed product by amenable discrete groups. First, we need the following lemmas.
 \begin{lemma}\label{basic2}
 Let $(\alpha, u)$ be a twisted action of a discrete group $G$ on a unital $C^*$-algebra $A$ with $u(s, s^{-1})=u(s^{-1}, s)=1$ and let $\iota$ be the trivial action of $G$ on a unital $C^*$-algebra $B$. Then,

 $(i)$ Suppose $(\alpha \otimes \iota, u \otimes 1)$ is the twisted action of $G$ on $A \otimes B$ satisfying $(\alpha \otimes \iota )(g) (a \otimes b)= \alpha (g)a \otimes b$. Then
 \begin{align*} (A \rtimes _{(\alpha, u), r}G) \otimes B &= (A \otimes B) \rtimes _{(\alpha \otimes \iota , u \rtimes 1), r} G. \end{align*}

 $(ii)$ Suppose $(\alpha \otimes _{max} \iota, u \otimes 1)$ is the twisted action of $G$ on $A \otimes _{max} B$ satisfying $(\alpha \otimes  _{max}\iota )(g) (a \otimes b)= \alpha (g)a \otimes b$. Then
 $$ (A \rtimes _{(\alpha, u)}G )\otimes B= (A \otimes _{max} B) \rtimes _{(\alpha \otimes _{max} \iota , u \rtimes 1)} G.$$
 \end{lemma}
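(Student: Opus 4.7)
The plan is to exploit the fact that the $G$-action on $B$ is trivial, so the $G$-direction and the $B$-direction decouple in both constructions. I treat the two parts in parallel, handling the reduced case via a concrete spatial picture and the full case via a universal property argument.

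For $(i)$, I would fix faithful representations $A \subseteq B(H_A)$ and $B \subseteq B(H_B)$, inducing the spatial faithful representation $A \otimes B \subseteq B(H_A \otimes H_B)$. By Lemma \ref{independence}, the reduced twisted crossed product $(A \otimes B) \rtimes_{(\alpha \otimes \iota, u \otimes 1), r} G$ may be computed inside $B(\ell^2(G) \otimes H_A \otimes H_B)$ via the covariant pair $(\pi_{\alpha \otimes \iota}, \lambda_{u \otimes 1})$ described after Lemma \ref{independence}. Under the canonical unitary identification $\ell^2(G) \otimes H_A \otimes H_B \cong (\ell^2(G) \otimes H_A) \otimes H_B$, a direct computation on elementary tensors $\xi \otimes \eta$ with $\xi \in \ell^2(G, H_A)$ and $\eta \in H_B$ gives
$$\pi_{\alpha \otimes \iota}(a \otimes b) \longleftrightarrow \pi_{\alpha}(a) \otimes b, \qquad \lambda_{u \otimes 1}(s) \longleftrightarrow \lambda_{u}(s) \otimes 1_{B},$$
where on the right $b$ acts as multiplication on $H_B$. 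The C$^*$-algebra generated by these images is, by definition, the spatial tensor product $(A \rtimes_{(\alpha, u), r} G) \otimes B$ acting on $(\ell^2(G) \otimes H_A) \otimes H_B$, and the two reduced twisted crossed products therefore coincide.

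For $(ii)$, I would appeal to the universal property of the full twisted crossed product. A covariant representation of $(A \otimes_{\max} B, G, \alpha \otimes_{\max} \iota, u \otimes 1)$ on a Hilbert space $H$ is a pair $(\pi, v)$ with $\pi$ a $*$-representation of $A \otimes_{\max} B$ and $v \colon G \to U(H)$ satisfying $v(g) \pi(x) v(g)^{*} = \pi((\alpha \otimes \iota)_{g}(x))$ and $v(g)v(h) = \pi((u \otimes 1)(g,h)) v(gh)$. Applying the covariance relation to $x = 1 \otimes b$ and using that $\iota$ is trivial forces $v(g)$ to commute with $\pi(1 \otimes B)$. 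By the defining property of the max tensor product, $\pi$ decomposes into a pair of commuting $*$-representations $\pi_{A} \colon A \to B(H)$ and $\pi_{B} \colon B \to B(H)$. Then $(\pi_{A}, v)$ is a covariant representation of $(A, G, \alpha, u)$, while $\pi_{B}$ commutes with its integrated form and with $v(G)$. This correspondence is bijective and natural, so it yields the claimed isomorphism $(A \rtimes_{(\alpha, u)} G) \otimes_{\max} B \cong (A \otimes_{\max} B) \rtimes_{(\alpha \otimes_{\max} \iota, u \otimes 1)} G$.

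The main obstacle is the careful verification of the generator identifications in $(i)$; the twist $u$ makes the computation of $\lambda_{u \otimes 1}(s)$ on elementary tensors slightly delicate because the cocycle values now lie in $A \otimes B$, but the triviality of the $B$-factor of $u \otimes 1$ reduces the calculation immediately to the known formula for $\lambda_{u}(s)$ tensored with $1_{B}$. The normalization hypothesis $u(s, s^{-1}) = u(s^{-1}, s) = 1$ plays no essential role in the argument beyond what is inherited from the definition of $\lambda_{u}$.
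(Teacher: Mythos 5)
Your argument is correct and is essentially the same as the paper's, which simply defers to the untwisted analogues (Lemmas 2.75 and 7.19 of Williams' book): your spatial identification $\pi_{\alpha\otimes\iota}(a\otimes b)\leftrightarrow\pi_\alpha(a)\otimes b$, $\lambda_{u\otimes 1}(s)\leftrightarrow\lambda_u(s)\otimes 1$ for the reduced case and the decomposition of covariant representations for the full case are precisely the standard adaptations being invoked there. Note also that you correctly read the left-hand side of $(ii)$ as $(A\rtimes_{(\alpha,u)}G)\otimes_{\max}B$, which is what the statement must mean (and how it is used in Proposition \ref{twist+nuclear+pair}), even though it is typeset with the unadorned $\otimes$.
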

 \begin{proof}
 The proof is similar to the non-twisted case, see Lemma 2.75 and Lemma 7.19 in \cite{williams}.
 \end{proof}
 By a  modification of Lemma 4.2.3 in \cite{brown}, we construct an approximating net in the twisted case.
 \begin{lemma}\label{Folner}
 Let $(A, G, \alpha, u)$ be a twisted $C^*$-dynamical system with $u(s^{-1}, s)=1$. Suppose that $F \subset G$ is a finite set, then there exist c.c.p. maps $\phi : A \rtimes _{(\alpha, u),r} G \rightarrow A \otimes M_{n}$ and $\psi : A \otimes M_{n} \rightarrow A \rtimes _{(\alpha, u),r} G$ such that $\psi \phi (\pi_{\alpha} (a) \lambda _{u}(s))= \frac{|F\cap sF|}{|F|}\pi_{\alpha} (a) \lambda _{u}(s)$.
 \end{lemma}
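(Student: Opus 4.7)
The plan is to adapt the construction in the proof of Lemma~4.2.3 of \cite{brown} to the twisted setting, tracking the cocycle factors via the 2-cocycle identity $\alpha_{r}(u(s,t))u(r,st)=u(r,s)u(rs,t)$. The two main inputs are this identity and the explicit compression formula derived in the proof of Lemma~\ref{independence}.

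Represent $A \rtimes_{(\alpha, u), r} G$ inside $B(H \otimes l^{2}(G))$ via $(\pi_{\alpha}, \lambda_{u})$ and let $P \in B(l^{2}(G))$ be the orthogonal projection onto $\mathrm{span}\{\delta_{g} : g \in F\}$. Setting $n=|F|$, I define $\phi(x) := (1 \otimes P)\, x\, (1 \otimes P)$, using the identification $P B(l^{2}(G)) P \cong M_{n}(\mathbb{C})$. This is c.c.p.\ as a compression of a $*$-representation, and the calculation carried out in the proof of Lemma~\ref{independence} shows
\[
\phi\bigl(\pi_{\alpha}(a)\lambda_{u}(s)\bigr) = \sum_{p \in F \cap sF} \alpha_{p^{-1}}(a)\, u(p^{-1}, s) \otimes e_{p, s^{-1}p},
\]
so in particular $\phi$ lands in $A \otimes M_{n}(\mathbb{C})$. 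For $\psi$, I put $v_{p} := \lambda_{u}(p)$ and set
\[
\psi(b \otimes e_{p,q}) := \tfrac{1}{|F|}\, \lambda_{u}(p)\, \pi_{\alpha}(b)\, \lambda_{u}(q)^{*}.
\]
The assumption $u(s^{-1}, s) = 1$ together with the 2-cocycle identity forces $u(p, p^{-1}) = 1$, so each $\lambda_{u}(p)$ is unitary with adjoint $\lambda_{u}(p^{-1})$. Writing $\psi(X) = W^{*}(\pi_{\alpha} \otimes \mathrm{id})(X)\,W$ for the isometry $W\xi := \tfrac{1}{\sqrt{|F|}} \sum_{p \in F} \lambda_{u}(p)^{*}\xi \otimes \delta_{p}$ (isometric precisely because the $\lambda_{u}(p)$ are unitary) exhibits $\psi$ as c.c.p., and its range is obviously contained in $A \rtimes_{(\alpha, u), r} G$.

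To verify the composition formula, apply $\psi$ to the typical summand of $\phi(\pi_{\alpha}(a) \lambda_{u}(s))$ to obtain
\[
\tfrac{1}{|F|}\, \lambda_{u}(p)\, \pi_{\alpha}\bigl(\alpha_{p^{-1}}(a)\, u(p^{-1}, s)\bigr)\, \lambda_{u}(p^{-1}s).
\]
Using the covariance relation to commute $\lambda_{u}(p)$ past $\pi_{\alpha}$, the identity $\alpha_{p}\circ\alpha_{p^{-1}} = \mathrm{id}$ (a consequence of $u(p,p^{-1})=1$), and $\lambda_{u}(p) \lambda_{u}(p^{-1}s) = \pi_{\alpha}(u(p, p^{-1}s))\lambda_{u}(s)$, this reduces to
\[
\tfrac{1}{|F|}\, \pi_{\alpha}\!\bigl(a \cdot \alpha_{p}(u(p^{-1},s))\, u(p, p^{-1}s)\bigr)\, \lambda_{u}(s).
\]
The inner argument collapses to $a$ by the 2-cocycle identity applied to $(r,s,t) = (p, p^{-1}, s)$, together with the normalizations $u(p, p^{-1}) = u(e, s) = 1$. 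Summing over $p \in F \cap sF$ yields $\frac{|F \cap sF|}{|F|}\, \pi_{\alpha}(a) \lambda_{u}(s)$, as required. The main obstacle is the bookkeeping with the cocycle: in the untwisted case every step reduces to a group identity for the $\lambda(g)$'s, whereas here factors of $u$ emerge from both $\phi$ (the $u(p^{-1}, s)$ term) and $\psi$ (from combining $\lambda_{u}(p)$ with $\lambda_{u}(p^{-1}s)$), and the 2-cocycle identity is exactly what is needed to make them cancel.
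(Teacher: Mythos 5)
Your proposal is correct and follows essentially the same route as the paper: the maps $\phi$ and $\psi$ you define are identical to the paper's (your $\frac{1}{|F|}\lambda_{u}(p)\pi_{\alpha}(b)\lambda_{u}(q)^{*}$ equals the paper's $\frac{1}{|F|}\pi_{\alpha}(\alpha_{p}(b)u(p,q^{-1}))\lambda_{u}(pq^{-1})$ via the covariance relation), and the cancellation of cocycle factors in the composition is the same cocycle computation. The only divergence is that you establish complete positivity of $\psi$ directly from the Stinespring form $W^{*}(\pi_{\alpha}\otimes \mathrm{id})(\cdot)W$ with $W$ an isometry, which is cleaner than the paper's route of first checking positivity by hand and then invoking matrix amplification via Lemma \ref{basic2}.
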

 \begin{proof}
 As in the proof of Lemma \eqref{independence}, define the c.c.p. map
 $$\phi : A \rtimes _{\alpha, u} G \rightarrow A \otimes M_{n},$$
  by $\phi (x)= (1 \otimes P) x (1 \otimes P)$ and so $\phi (a \lambda_{u}(s))= \sum_{p \in F \cap sF} \alpha _{p^{-1}}(a)u(p^{-1},s) \otimes e_{p, s^{-1}p} $.   Let
 $\psi : A \otimes M_{n} \rightarrow A \rtimes _{\alpha, u} G$ be defined by 
 $$\psi ( a \otimes e_{p,q})= \frac{1}{|F|}\pi_{\alpha}( \alpha _{p}(a) u(p, q^{-1})) \lambda _{u}(pq^{-1}).$$
Recall that any positive element in $A \otimes M_{n}$ is of the form $\sum_{ p, q \in F} a_{p}^{*} a_{q} \otimes e_{p,q}$. We show that $\psi(\sum _{p,q} a_{p}^{*} a_{q} \otimes e_{p,q})$ is a positive element in $A \rtimes _{(\alpha, u)} G$. To this end, observe that
\begin{align*}
( \sum _{p \in F} \pi_{\alpha}(a_{p}) \lambda _{u}(p ^{-1}))^{*}( \sum _{q \in F} \pi_{\alpha}(a_{q} )\lambda _{u}(q ^{-1}))&=
\sum _{p,q \in F} \lambda _{u}(p) \pi_{\alpha}(a_{p}^{*} a_{q})\lambda _{u}(q^{-1})\\
 &=\sum _{p,q \in F} \pi_{\alpha}(\alpha _{p}(a_{p}^{*} a_{q}) u(p,q^{-1}))\lambda _{u}(pq^{-1}),
 \end{align*}
Note that the last expression is equal to $\frac{1}{|F|} \psi(\sum_{ p, q \in F} a_{p}^{*} a_{q} \otimes e_{p,q})$. Thus we have shown that $ \psi$ is positive. By Lemma \eqref{basic2}, and a similar argument  as  in the proof of Lemma 4.2.3 of \cite{brown}, the  positivity of $\psi$ implies its complete positivity.

Finally, using the assumption that $u(p^{-1}, p)=1$ for all $p \in G$, we have
\begin{align*}
\psi \phi ( \sum _{s}\pi_{\alpha}(a_{s})\lambda_{u}(s))&=
\psi ( \sum _{s}\sum_{p \in F \cap sF} \alpha _{p^{-1}}(a) u(p^{-1},s) \otimes e_{p, s^{-1}p})\\
 &=\frac{1}{|F|}\sum _{s}\sum_{p \in F \cap sF}  \pi _{\alpha}( \alpha _{p}(\alpha _{p^{-1}}(a_{s}) )u(p^{-1},s)) \pi _{\alpha} (u(p,p^{-1}s))\lambda_{u}(s)\\
 &=\frac{1}{|F|}\sum _{s}\sum_{p \in F \cap sF}  \pi _{\alpha}(a_{s}u(p,p^{-1})u(1,s))\lambda_{u}(s)\\
 &=\frac{1}{|F|} \sum _{s}\sum_{p \in F \cap sF} \pi _{\alpha}(a_{s})\lambda_{u}(s)\\
 &= \frac{|F\cap sF|}{|F|}\sum_{s} \pi_{\alpha} (a_{s}) \lambda _{u}(s),
\end{align*}
 as desired. 
 \end{proof}
 Now, the required stability is concluded by the approximation nets constructed in the previous lemma.
 \begin{proposition}\label{twist+QWEP+finite}
  Let $(\alpha, u)$ be a twisted action of an amenable discrete group $G$ on  a unital $C^*$-algebra $A$ with $u(s^{-1},s)=1$.  Then  $A$ and $A \rtimes _{(\alpha, u),r} G$ share  any of the following properties: nuclearity, exactness, CBAP, (S)OAP and the QWEP. 
\end{proposition}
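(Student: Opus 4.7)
The plan is to deduce both directions of the ``sharing'' claim by combining the approximating nets constructed in Lemma \ref{Folner} with the Følner property of $G$, and then applying the abstract criteria of Lemmas \ref{skalski} and \ref{ozawa}. The reverse direction will come from the canonical conditional expectation onto $A$.

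First I would address the implication from $A$ to $A \rtimes _{(\alpha, u),r} G$. Since $G$ is amenable, choose a Følner net $(F_i)$ of finite subsets of $G$ such that $|F_i \cap sF_i|/|F_i| \to 1$ for every $s \in G$. Applying Lemma \ref{Folner} to each $F_i$ gives c.c.p. maps
\[ \phi_i : A \rtimes _{(\alpha, u),r} G \to A \otimes M_{n_i}, \qquad \psi_i : A \otimes M_{n_i} \to A \rtimes _{(\alpha, u),r} G, \]
satisfying $\psi_i\phi_i(\pi_\alpha(a)\lambda_u(s)) = (|F_i \cap sF_i|/|F_i|)\,\pi_\alpha(a)\lambda_u(s)$. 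On the dense $*$-subalgebra $C_c(G,A)$ we therefore have $\psi_i\phi_i \to \mathrm{id}$, and since each $\psi_i\phi_i$ is contractive, a routine $\varepsilon/3$ argument extends this to point-norm convergence on all of $A \rtimes _{(\alpha, u),r} G$. Every $A \otimes M_{n_i}$ inherits nuclearity, exactness, CBAP, (S)OAP and the QWEP from $A$, so Lemma \ref{skalski} produces the first four properties for the twisted crossed product and Lemma \ref{ozawa} delivers the QWEP. In the CBAP case the hypothesis $\sup_i \Lambda(A \otimes M_{n_i}) < \infty$ is automatic because tensoring with a finite-dimensional matrix algebra does not alter the CBAP constant.

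For the reverse implication, the canonical faithful conditional expectation $E : A \rtimes _{(\alpha,u),r} G \to A$ sending $\pi_\alpha(a)\lambda_u(e) \mapsto a$ and $\pi_\alpha(a)\lambda_u(s) \mapsto 0$ for $s \ne e$ exhibits $A$ as the range of a c.c.p. projection on $A \rtimes _{(\alpha,u),r} G$. Nuclearity, exactness, CBAP, (S)OAP and the QWEP all pass from a $C^*$-algebra to any $C^*$-subalgebra that is the image of a conditional expectation (the first four via the standard retract argument, the QWEP via Ozawa's results used in Lemma \ref{ozawa}). This transfers the properties back from $A \rtimes _{(\alpha, u),r} G$ to $A$.

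The main obstacle is the forward direction, specifically the passage from pointwise convergence of $\psi_i\phi_i$ on $C_c(G,A)$ to the norm-closure. Uniform contractivity of the maps reduces this to density, so the entire argument hinges on the Følner condition kicking in exactly where needed. A secondary technical point is checking the hypotheses of Lemma \ref{basic2} and Lemma \ref{Folner}, which require $u(s^{-1},s)=1$; this is part of the hypothesis of the proposition and is used both in the definition of $\psi_i$ and in the final algebraic simplification of $\psi_i\phi_i$.
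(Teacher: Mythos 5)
Your proposal follows essentially the same route as the paper: a Følner net combined with the c.c.p. maps of Lemma \ref{Folner} yields $\psi_i\phi_i \to \mathrm{id}$ in point-norm, and then Lemma \ref{skalski} (for nuclearity, exactness, CBAP, (S)OAP) and Lemma \ref{ozawa} (for the QWEP) finish the forward direction. You are in fact somewhat more complete than the paper, which leaves both the $\varepsilon/3$ density argument and the reverse direction (via the canonical conditional expectation onto $A$) implicit.
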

\begin{proof}
 Let $\{F_{\alpha} \}$ be a Folner sequence for amenable group $G$ and $\phi _{\alpha} : A \rtimes _{(\alpha, u),r} G \rightarrow A \otimes M_{n(\alpha)}$ and $\psi_{\alpha} : A \otimes M_{n(\alpha)} \rightarrow A \rtimes _{(\alpha, u),r} G$ be the corresponding c.c.p. maps constructed in Lemma \eqref{Folner} such that
 $$ \|\psi_{\alpha} \phi _{\alpha}(x) -x\|\rightarrow 0,$$
for any $ x \in A \rtimes _{\alpha, u} G$. Then $A$ and  $A \otimes M_{n(\alpha)}$ share all of the above  properties. Therefore, the result is immediate by Lemma 3.1 of \cite{skalski} and Lemma \eqref{Folner}.
\end{proof}
The Kirchberg's tensorial characterization of the LLP and WEP enables us to employ the argument given in Theorem 2.1 of \cite{farenick} to obtain the stability of the LLP and WEP under taking twisted reduced products by amenable discrete groups. For this,
 we first need the following lemma, which can be proved as Theorem 4.2.4 of \cite{brown}.
  \begin{lemma}\label{basic1}
  Let $A$ be a unital $C^*$-algebra and $\alpha$ be an action of a discrete amenable group $G$ on $A$ then $A \rtimes _{(\alpha, u)} G$ and $A \rtimes _{(\alpha, u),r} G$ are naturally $*$-isomorphic.
  \end{lemma}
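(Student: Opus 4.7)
The plan is to mirror the proof of Theorem 4.2.4 of \cite{brown}. Let $\Lambda : A \rtimes_{(\alpha,u)} G \to A \rtimes_{(\alpha,u),r} G$ denote the canonical surjective $\ast$-homomorphism; it suffices to show $\Lambda$ is injective, and the strategy is to transport the c.c.p. approximating maps of Lemma \ref{Folner} to the full crossed product and exploit amenability through a F\o lner net.

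Fix a F\o lner net $\{F_\beta\}$ for $G$. For each $\beta$, I would set $\tilde{\phi}_\beta := \phi_\beta \circ \Lambda : A \rtimes_{(\alpha,u)} G \to A \otimes M_{n(\beta)}$, which is automatically c.c.p., and define $\tilde{\psi}_\beta : A \otimes M_{n(\beta)} \to A \rtimes_{(\alpha,u)} G$ by the same formula used for $\psi_\beta$ in Lemma \ref{Folner} but with the universal generators $\pi_\alpha^{\mathrm{full}}$ and $\lambda_u^{\mathrm{full}}$ in place of the reduced ones. The verification that $\tilde{\psi}_\beta$ is c.c.p. follows the argument for $\psi_\beta$: positivity is checked through an algebraic identity invoking only covariance and the cocycle relations (which hold in both crossed products), and Lemma \ref{basic2}(ii) upgrades positivity to complete positivity by passing to $A \otimes_{\max} M_n$. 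An algebraic computation identical to the one in the proof of Lemma \ref{Folner} (using $u(p, p^{-1}) = 1$) then yields
\[
\tilde{\psi}_\beta \tilde{\phi}_\beta\left( \sum_{s} \pi_\alpha^{\mathrm{full}}(a_s)\, \lambda_u^{\mathrm{full}}(s) \right) \;=\; \sum_{s} \frac{|F_\beta \cap sF_\beta|}{|F_\beta|}\; \pi_\alpha^{\mathrm{full}}(a_s)\, \lambda_u^{\mathrm{full}}(s)
\]
for every finite sum in $C_c(G, A)$.

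By the F\o lner property, $|F_\beta \cap sF_\beta|/|F_\beta| \to 1$ for each $s \in G$, so the displayed formula shows $\tilde{\psi}_\beta \tilde{\phi}_\beta(y) \to y$ in the full norm for every $y \in C_c(G, A)$. Since each $\tilde{\psi}_\beta \tilde{\phi}_\beta$ is contractive and $C_c(G, A)$ is dense in $A \rtimes_{(\alpha,u)} G$, a standard $\varepsilon/3$ argument extends this convergence to all $x \in A \rtimes_{(\alpha,u)} G$. If $\Lambda(x) = 0$ then $\tilde{\psi}_\beta \tilde{\phi}_\beta(x) = \tilde{\psi}_\beta(\phi_\beta(\Lambda(x))) = 0$ for every $\beta$, so passing to the limit forces $x = 0$; hence $\Lambda$ is injective, giving the desired natural $\ast$-isomorphism. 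The main obstacle I anticipate is the complete positivity of $\tilde{\psi}_\beta$ on the full side, but this is a routine adaptation of the computation in Lemma \ref{Folner} once one invokes the full version of Lemma \ref{basic2}; no new ingredient beyond covariance, the normalized cocycle identity, and the tensorial trick is required.
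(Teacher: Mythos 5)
Your proposal is correct and is precisely the argument the paper points to: the paper gives no proof beyond citing Theorem 4.2.4 of \cite{brown}, and your adaptation—transporting the F\o lner-set c.c.p.\ maps of Lemma \ref{Folner} to the full twisted crossed product, using the purely algebraic covariance/cocycle identities plus Lemma \ref{basic2}(ii) for complete positivity of $\tilde\psi_\beta$, and deducing injectivity of the canonical surjection from $\tilde\psi_\beta\tilde\phi_\beta\to\mathrm{id}$ in the full norm—is exactly the twisted version of that proof. No gaps.
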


 \begin{proposition} \label{twist+nuclear+pair}
Let $(\alpha, u)$ be a twisted action of  amenable discrete group $G$ on a unital $C^*$-algebra $A$ with $u(s^{-1},s)=1$ and let $C$ be a $C^*$-algebra.  Suppose that $(A, C)$ is a nuclear pair then $(A \rtimes _{(\alpha, u),r} G, C)$ is a nuclear pair.
\end{proposition}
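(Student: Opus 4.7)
The plan is to unwind the definition of a nuclear pair using the tensor-crossed-product identifications in Lemma \ref{basic2} together with the amenability of $G$ via Lemma \ref{basic1}, following the strategy of Farenick's Theorem 2.1. Recall that $(A,C)$ being a nuclear pair means exactly that the canonical surjection $A \otimes_{\max} C \to A \otimes_{\min} C$ is an isomorphism. We want the same statement for the pair $(A \rtimes_{(\alpha,u),r} G, \, C)$.

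First I would identify each of the two completions of the algebraic tensor product as a twisted crossed product. Using Lemma \ref{basic2}(i) on the minimal side gives
\[
(A \rtimes_{(\alpha,u),r} G) \otimes_{\min} C \;\cong\; (A \otimes_{\min} C) \rtimes_{(\alpha \otimes \iota,\, u \otimes 1),\,r} G.
\]
On the maximal side, amenability of $G$ lets us apply Lemma \ref{basic1} to replace the reduced crossed product by the full one, after which Lemma \ref{basic2}(ii) gives
\[
(A \rtimes_{(\alpha,u),r} G) \otimes_{\max} C \;\cong\; (A \rtimes_{(\alpha,u)} G) \otimes_{\max} C \;\cong\; (A \otimes_{\max} C) \rtimes_{(\alpha \otimes_{\max} \iota,\, u \otimes 1)} G.
\]

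Next, I would invoke the hypothesis that $(A,C)$ is nuclear, so that $A \otimes_{\max} C = A \otimes_{\min} C$ as $C^*$-algebras, and observe that under this identification the twisted action $(\alpha \otimes \iota, u \otimes 1)$ is the same on both sides (it acts trivially on the $C$-factor). Applying Lemma \ref{basic1} once more to the amenable-group crossed product $(A \otimes_{\min} C) \rtimes_{(\alpha \otimes \iota,\, u \otimes 1)} G$ identifies the full and reduced versions, yielding
\[
(A \otimes_{\max} C) \rtimes_{(\alpha \otimes_{\max} \iota,\, u \otimes 1)} G \;\cong\; (A \otimes_{\min} C) \rtimes_{(\alpha \otimes \iota,\, u \otimes 1),\,r} G.
\]
Chaining these isomorphisms shows that the canonical surjection from $(A \rtimes_{(\alpha,u),r} G) \otimes_{\max} C$ onto $(A \rtimes_{(\alpha,u),r} G) \otimes_{\min} C$ is an isomorphism, which is the definition of a nuclear pair.

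The main obstacle is bookkeeping rather than substance: one must verify that the various identifications are compatible with the twisted action, that the canonical quotient map from max to min tensor products is actually the composition that our chain of isomorphisms inverts (so that the argument indeed establishes equality of norms and not merely abstract isomorphism of the two completions), and that the hypothesis $u(s^{-1},s)=1$ (needed for Lemmas \ref{basic2} and \ref{basic1}) is preserved by the inflated cocycle $u \otimes 1$. These are routine checks analogous to the untwisted situation treated in Farenick's paper.
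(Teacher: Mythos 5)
Your argument is correct and follows essentially the same route as the paper: both proofs combine Lemma \ref{basic1} (amenability identifies full and reduced twisted crossed products) with the two tensor-product identifications of Lemma \ref{basic2}, insert the hypothesis $A \otimes_{\max} C = A \otimes_{\min} C$ in the middle, and chain the resulting isomorphisms; the paper merely writes the chain in one direction while you work from both ends. Your explicit remarks on verifying that the composite isomorphism is the canonical max-to-min surjection and that the inflated cocycle still satisfies the normalization condition are sensible bookkeeping points that the paper leaves implicit.
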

\begin{proof}
 Let $(A, C)$ be a nuclear pair then 
\begin{align*}(A \rtimes _{(\alpha, u)} G) \otimes C &=(A \rtimes _{(\alpha, u), r} G) \otimes C\\
&=(A \otimes C) \rtimes _{( \alpha \otimes \iota), r} G\\
&=(A \otimes _{max} C)  \rtimes _{( \alpha \otimes _{max} \iota), r} G\\
&=(A \otimes _{max} C) \rtimes _{( \alpha \otimes _{max} \iota)} G\\
&= (A \rtimes _{(\alpha, u)} G) \otimes _{max}C.\end{align*}
Therefore, $(A \rtimes _{(\alpha, u),r} G, C)$ is a nuclear pair, as desired.
\end{proof}

 \begin{corollary} \label{twist+WEP}
Let $(\alpha, u)$ be a twisted action of  amenable discrete group $G$ on a unital $C^*$-algebra $A$ with $u(s^{-1},s)=1$. Suppose that $A$ has the LLP or WEP then $A \rtimes _{(\alpha, u),r} G$ has that property.
\end{corollary}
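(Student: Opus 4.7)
The plan is to derive this corollary as a direct consequence of Proposition \ref{twist+nuclear+pair} combined with the Kirchberg tensorial characterizations of the LLP and WEP recalled before Proposition \ref{Morita+WEP+LLP}. No new approximation-net construction is needed; everything has been reduced to a statement about preservation of nuclear pairs.

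First I would handle the LLP case. By Kirchberg's characterization, the hypothesis that $A$ has the LLP is equivalent to $(A, B(\ell^{2}))$ being a nuclear pair. Applying Proposition \ref{twist+nuclear+pair} with the choice $C = B(\ell^{2})$ yields that $(A \rtimes_{(\alpha,u),r} G,\, B(\ell^{2}))$ is a nuclear pair, which by the same characterization means precisely that $A \rtimes_{(\alpha,u),r} G$ has the LLP.

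For the WEP case I would argue identically, but now taking $C = C^{*}(\mathbb{F}_{\infty})$. The WEP of $A$ is equivalent to $(A, C^{*}(\mathbb{F}_{\infty}))$ being a nuclear pair; Proposition \ref{twist+nuclear+pair} then upgrades this to $(A \rtimes_{(\alpha,u),r} G,\, C^{*}(\mathbb{F}_{\infty}))$ being a nuclear pair, which is exactly the WEP for $A \rtimes_{(\alpha,u),r} G$.

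There is no real obstacle here, since the hard work is done in Proposition \ref{twist+nuclear+pair} (which in turn leaned on Lemma \ref{basic1} to identify full and reduced twisted crossed products by amenable groups, and on Lemma \ref{basic2} to commute crossed products with both minimal and maximal tensor products). The only point to verify is that the hypothesis $u(s^{-1},s) = 1$, needed in Proposition \ref{twist+nuclear+pair}, is the same hypothesis we have in the corollary, so the application is immediate.
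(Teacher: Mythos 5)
Your proposal is correct and is exactly the paper's argument: the paper also deduces the corollary from Proposition \ref{twist+nuclear+pair} together with Kirchberg's tensorial characterizations of the LLP and WEP, and you have merely spelled out the choices $C = B(\ell^{2})$ and $C = C^{*}(\mathbb{F}_{\infty})$ explicitly.
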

\begin{proof}
It follows from Proposition \eqref{twist+nuclear+pair} and Kirchberg's tensorial factorization for the LLP and WEP.
\end{proof}

Before stating the main result of this section, we need the following proposition which is analogous to Proposition 3 of \cite{bedos} for von Neumann algebra crossed products.
\begin{proposition}\label{seq}
Let $1 \rightarrow H \rightarrow G \rightarrow Q \rightarrow 1$ be an exact sequence of discrete groups and $\alpha$ be an action of a discrete group $G$ on a unital $C^*$-algebra $A$. Then there exists a twisted action $(\beta, u)$ of $Q$ on $A \rtimes _{\alpha |_{H},r}H$  with $u(s^{-1},s)=1$ for all $s \in G$, such that
$A \rtimes _{\alpha ,r} G$ is $*$-isomorphic to $(A \rtimes _{\alpha |_{H},r}H)\rtimes _{\beta, u}Q.$
\end{proposition}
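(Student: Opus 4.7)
The plan is to mimic the standard decomposition of iterated (untwisted) crossed products (\cite{williams}, Lemma 2.75) in the twisted reduced setting, with an additional normalisation step to force $u(s^{-1},s)=1$. First I would fix a set-theoretic section $\sigma\colon Q\to G$ of the quotient map $\pi\colon G\to Q$ with $\sigma(1_Q)=1_G$, so that every $g\in G$ factors uniquely as $g=h\sigma(s)$ with $s=\pi(g)\in Q$ and $h\in H$. Let $v_s$ denote the unitary in the multiplier algebra of $A\rtimes_{\alpha,r}G$ corresponding to $\sigma(s)\in G$. Because $H$ is normal in $G$, conjugation by $v_s$ preserves the subalgebra $B:=A\rtimes_{\alpha|_H,r}H$, so $\beta_s:=\mathrm{Ad}(v_s)|_{B}$ is a $*$-automorphism of $B$. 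The failure of $s\mapsto v_s$ to be multiplicative is measured by $c(s,t):=\sigma(s)\sigma(t)\sigma(st)^{-1}\in H\subset U(B)$; setting $u_0(s,t):=c(s,t)$ one has $v_sv_t=u_0(s,t)v_{st}$, so $(\beta,u_0)$ satisfies axioms (i)--(iii) of a twisted action of $Q$ on $B$, with $u_0(1,s)=u_0(s,1)=1$.

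Next I would arrange the extra normalisation $u(s^{-1},s)=1$ by perturbing the lifts. Replacing $v_s$ by $w_sv_s$ for unitaries $w_s\in U(B)$ (with $w_{1_Q}=1$) yields a new twisted action whose cocycle is $u(s,t)=w_s\beta_s(w_t)u_0(s,t)w_{st}^{-1}$. Partition $Q\setminus\{1_Q\}$ into pairs $\{s,s^{-1}\}$ with $s\neq s^{-1}$, pick $w_s$ arbitrarily on one representative of each pair, and then force $u(s^{-1},s)=1$ by solving for $w_{s^{-1}}$ from the displayed formula. For an involution $s=s^{-1}\in Q$ the constraint reduces to $w_s\beta_s(w_s)=u_0(s,s)^{-1}$, which can be solved for $w_s$ by a Borel functional-calculus square-root within $U(B)$, since $u_0(s,s)^{-1}$ is a unitary in $B$. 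After this adjustment, $(\beta,u)$ is a twisted action of $Q$ on $B$ satisfying axioms (i)--(iii) as well as $u(s^{-1},s)=1$.

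Finally, for the isomorphism, define $\Phi$ on the algebraic core $C_c(Q,B)$ by $\Phi(x\lambda_u(s))=xv_s$. By construction, the pair $(\iota_B,v)$ inside the multiplier algebra of $A\rtimes_{\alpha,r}G$ satisfies exactly the covariance $v_sxv_s^{-1}=\beta_s(x)$ and the twist $v_sv_t=u(s,t)v_{st}$, so the universal property of the full twisted crossed product yields a $*$-homomorphism $B\rtimes_{\beta,u}Q\to A\rtimes_{\alpha,r}G$ whose image contains $A$, $\lambda_G(H)$ and every $v_s$, hence is dense by the coset decomposition $G=\bigsqcup_{s\in Q}H\sigma(s)$. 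To descend to the reduced twisted crossed product and simultaneously obtain injectivity, I would identify the two reduced pictures spatially: the unitary $U\colon\ell^2(G)\to\ell^2(H)\otimes\ell^2(Q)$ defined by $U\delta_{h\sigma(s)}=\delta_h\otimes\delta_s$ conjugates the regular covariant pair $(\pi_\alpha,\lambda_G)$ on $\mathcal H\otimes\ell^2(G)$ into the iterated regular covariant pair associated with $(\beta,u)$ on $\mathcal H\otimes\ell^2(H)\otimes\ell^2(Q)$, with independence of the chosen faithful representation of $A$ ensured by Lemma \ref{independence}. I expect the cocycle-normalisation of the second paragraph to be the genuine obstacle: for involutions in $Q$ admitting no involutive lift in $G$ one must exploit the unitary group of the full coefficient algebra $B$ rather than only elements of $H$, whereas everywhere else the argument is essentially the bookkeeping of the untwisted proof in \cite{williams}.
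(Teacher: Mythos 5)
Your first and third paragraphs are the standard Packer--Raeburn/B\'edos decomposition and are essentially fine; note, for what it is worth, that the paper offers no proof of this proposition at all (it only asserts the analogy with Proposition~3 of \cite{bedos}), so there is no argument in the paper to compare yours against. The problem is the normalisation step, which you correctly single out as the genuine obstacle but then resolve incorrectly. For a non-involution pair $\{s,s^{-1}\}$ your bookkeeping works (and the cocycle identity with $r=s$, applied to $(s,s^{-1},s)$, does give $u(s^{-1},s)=1$ from $u(s,s^{-1})=1$). For an involution $s=s^{-1}$, however, the equation $w_s\beta_s(w_s)=u_0(s,s)^{-1}$ is \emph{not} a functional-calculus square root: writing $V=v_{\sigma(s)}$ it is equivalent to $(w_sV)^2=1$, i.e.\ to finding a self-adjoint unitary in the coset $U(B)\,V$. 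Borel functional calculus does not help here for two reasons: it produces elements of the enveloping von Neumann algebra rather than of $B$, and even the existence claim is false in $C^*$-algebras, where unitaries (such as $\zeta\mapsto\zeta$ in $C(\mathbb{T})$) need not have square roots.

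The gap is not merely technical; it cannot be filled. Take $A=\mathbb{C}$, $G=\mathbb{Z}$, $H=2\mathbb{Z}$, $Q=\mathbb{Z}/2\mathbb{Z}$. Then $B=C^*_r(2\mathbb{Z})\cong C(\mathbb{T})$ via $v_2\mapsto\zeta$, $\beta_s=\mathrm{id}$, and $u_0(s,s)=v_{\sigma(s)^2}=v_{\pm 2}$, so your equation becomes $f(\zeta)^2=\bar\zeta$ for a continuous $f:\mathbb{T}\to\mathbb{T}$, which is impossible by a winding-number count. Worse, no choice whatsoever works: a cocycle on $\mathbb{Z}/2\mathbb{Z}$ satisfying $u(1,\cdot)=u(\cdot,1)=1$ and $u(s,s)=1$ is identically $1$, hence $(\beta,u)$ would be an honest $\mathbb{Z}/2\mathbb{Z}$-action on $C(\mathbb{T})$, and none of the (trivial, reflection, or free) order-two actions on $C(\mathbb{T})$ has crossed product isomorphic to $C(\mathbb{T})=C^*_r(\mathbb{Z})$. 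So the normalisation $u(s^{-1},s)=1$ is unattainable whenever some involution of $Q$ has no involutive lift to $G$, and the proposition as literally stated is false; this is exactly where the analogy with B\'edos's von Neumann algebra result breaks down, since in a von Neumann algebra the required (twisted) square roots do exist by Borel functional calculus. Any correct write-up must either drop the condition $u(s^{-1},s)=1$ (and reprove Lemmas~\ref{basic2} and~\ref{Folner} with the extra cocycle corrections) or add a hypothesis guaranteeing involutive lifts.
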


Now, we are ready to state our main result of this section.
\begin{theorem}\label{main}
Let $1 \rightarrow H \rightarrow G \rightarrow Q \rightarrow 1$ be an exact sequence of  discrete groups with $Q$ amenable and $\alpha$ be an action of $G$ on a unital $C^*$-algebra $A$. Then $A \rtimes _{\alpha, r}G$ has any of the properties mentioned in Theorem \eqref{index}  if and only if $A \rtimes _{\alpha |_{H},r}H$ has that property.
\end{theorem}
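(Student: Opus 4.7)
The plan is to deduce Theorem~\ref{main} formally from the structural decomposition of Proposition~\ref{seq} combined with the twisted crossed product stability results Proposition~\ref{twist+QWEP+finite} and Corollary~\ref{twist+WEP}, together with the fact that the listed properties pass to the image of a conditional expectation. Writing $B := A \rtimes_{\alpha|_H, r} H$, Proposition~\ref{seq} supplies a twisted action $(\beta, u)$ of $Q$ on $B$ with the normalization $u(s^{-1}, s) = 1$ and a $*$-isomorphism $A \rtimes_{\alpha, r} G \cong B \rtimes_{(\beta, u)} Q$. Since $Q$ is amenable, Lemma~\ref{basic1} identifies the full twisted crossed product on the right with its reduced counterpart, so equivalently $A \rtimes_{\alpha, r} G \cong B \rtimes_{(\beta, u), r} Q$.

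For the forward implication, assume $B$ has one of the listed properties. Proposition~\ref{twist+QWEP+finite} then handles nuclearity, exactness, CBAP, (S)OAP, and QWEP, while Corollary~\ref{twist+WEP} handles the LLP and the WEP; in each case the property transfers from $B$ to $B \rtimes_{(\beta, u), r} Q$, and hence to $A \rtimes_{\alpha, r} G$ via the isomorphism above. Crucially, the normalization $u(s^{-1}, s) = 1$ coming from Proposition~\ref{seq} is exactly what is required to invoke these two results.

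For the reverse implication, I would use the canonical Khoshkam--Skandalis conditional expectation $E : A \rtimes_{\alpha, r} G \to B$ sending $a u_s$ to $a u_s$ when $s \in H$ and to $0$ otherwise, which was already recalled at the beginning of this section. This realises $B$ as the image of a faithful c.c.p.\ projection from $A \rtimes_{\alpha, r} G$. Each of nuclearity, CBAP, (S)OAP, WEP, LLP, and QWEP is inherited by the image of a conditional expectation (one precomposes the defining approximating/factorization data with $E$ and the inclusion, or in the QWEP case uses that $B^{**}$ sits as the range of a normal conditional expectation from the QWEP von Neumann algebra $(A \rtimes_{\alpha, r} G)^{**}$), and exactness passes to arbitrary $C^*$-subalgebras. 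Hence $B$ inherits any property that $A \rtimes_{\alpha, r} G$ enjoys.

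The main obstacle is not in this bookkeeping but rather in the preparatory Proposition~\ref{seq}: one must choose a set-theoretic section $Q \to G$, use it to produce a cocycle $u$ on $Q$ taking values in the unitaries of $M(B)$, arrange the normalization $u(s^{-1}, s) = 1$, and verify that the resulting twisted crossed product is $*$-isomorphic to $A \rtimes_{\alpha, r} G$ via an extension of the natural inclusion $B \hookrightarrow A \rtimes_{\alpha, r} G$. Once that input is in place, the proof of Theorem~\ref{main} is a formal combination of earlier results in this section.
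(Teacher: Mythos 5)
Your proposal is correct and follows essentially the same route as the paper's proof: the canonical conditional expectation onto $A \rtimes_{\alpha|_{H},r}H$ handles one direction, and the decomposition of Proposition \ref{seq} combined with Proposition \ref{twist+QWEP+finite} and Corollary \ref{twist+WEP} handles the other. Your explicit appeal to Lemma \ref{basic1} to pass between the full and reduced twisted crossed products, and your spelled-out justification of why each property descends along the conditional expectation, only make explicit steps the paper leaves implicit.
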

\begin{proof}
The existence of a conditional expectation from $A \rtimes _{\alpha, r}G$ onto $A \rtimes _{\alpha |_{H},r}H$ ensures that if $A \rtimes _{\alpha, r}G$ has any of the above properties then $A \rtimes _{\alpha, r}G$ has that property. For the other direction, first note that by Proposition \eqref{seq}, there exists a twisted action $(\beta, u)$ of $Q$ on $A \rtimes _{\alpha |_{H},r}H$ such that $A \rtimes _{\alpha ,r} G$ is $*$-isomorphic to $(A \rtimes _{\alpha |_{H},r}H)\rtimes _{\beta, u}Q$.
Therefore, it is enough to use  Propositions \eqref{twist+QWEP+finite}, \eqref{twist+WEP} and \eqref{seq} to obtain the desired result.
\end{proof}

Let $H$ be a normal subgroup of a discrete group $G$ with $G/H$ amenable. Then Theorem \eqref{main} adds the LLP,  WEP and QWEP to the already known list of finite-dimensional approximation properties shared between $C^{*}_{r}(G)$ and $C^{*}_{r}(H)$.

\vspace{.3cm}
{\textbf{Acknowledgment.}}
The first author wishes to express her gratitude to Mehrdad Kalantar for many stimulating conversations and encouragement during her research visit to IMPAN. She would like to thank  the IMPAN for the hospitality during her visit. Authors were supported by a grant from IPM (Grant number. 94430215 for the second author).

\bibliographystyle{amsplain}

\end{document}